\newtheorem{theorem}{Proposition}[section]
\newtheorem{prop}[theorem]{Proposition}
\newtheorem{lemma}[theorem]{Lemma}
\crefname{equation}{}{}
\crefname{prop}{Prop.}{Props.}
\Crefname{prop}{Prop.}{Props.}
\crefname{lemma}{Lemma}{Lemmas}
\Crefname{lemma}{Lemma}{Lemmas}
\crefname{algorithm}{Alg.}{Alg.}
\newtheorem{algorithm}{Algorithm}
\pgfplotsset{compat=1.16}
\newcommand{\mat}[1]{#1}
\newcommand{\vek}[1]{#1}
\newcommand{\expect}[1]{\mathbb{E}\left[#1 \right]}
\newcommand{\reals}{\mathbb{R}}
\newcommand{\diag}{\mathrm{diag}}
\newcommand{\grad}{\nabla}
\newcommand{\rad}{\text{rad}\xspace}
\newcommand{\argMin}[1]{\underset{#1}{\mathrm{arg\ min}\,}}
\newcommand{\norm}[1]{\left\lVert #1 \right\lVert}
\newcommand{\twoNorm}[1]{\norm{#1}_2}
\renewcommand{\d}{\mathrm{d}}
\newcommand{\numMcRuns}{100\xspace}
\newcommand{\rmse}{RMSE\xspace}
\newcommand{\nees}{NEES\xspace}
\newcommand{\kf}{KF\xspace}
\newcommand{\iekf}{IEKF\xspace}
\newcommand{\rts}{RTS\xspace}
\newcommand{\eks}{EKS\xspace}
\newcommand{\ieks}{IEKS\xspace}
\newcommand{\slr}{\text{SLR}\xspace}
\newcommand{\iplf}{IPLF\xspace}
\newcommand{\ipls}{IPLS\xspace}
\newcommand{\gn}{GN\xspace}
\newcommand{\lm}{LM\xspace}
\newcommand{\lmIeks}{LM--IEKS\xspace}
\newcommand{\diplf}{D\iplf}
\newcommand{\lmIpls}{LM--IPLS\xspace}
\newcommand{\normal}{\mathcal{N}}
\newcommand{\err}[1]{\vek{#1}}
\newcommand{\covMat}[1]{\mat{#1}}
\newcommand{\ts}{k}
\newcommand{\tsFin}{K}
\newcommand{\x}{\vek{x}}
\newcommand{\xHat}{\hat{\x}}
\newcommand{\dimState}{d_{\x}}
\newcommand{\procFn}{\vek{f}}
\newcommand{\procNoise}{\err{q}}
\newcommand{\procCov}{\covMat{Q}}
\newcommand{\meas}{\vek{y}}
\newcommand{\dimMeas}{d_{\meas}}
\newcommand{\measFn}{{h}}
\newcommand{\measNoise}{\err{r}}
\newcommand{\measCov}{\covMat{R}}
\newcommand{\measSeq}{\meas_{1:\tsFin}}
\newcommand{\allDim}{D}
\newcommand{\pred}[1]{#1 \mid #1-1}
\newcommand{\upd}[1]{#1 \mid #1}
\newcommand{\smoothCurr}[1]{#1 \mid \tsFin}
\newcommand{\priorMean}{\hat{\x}_{1 \mid 0}}
\newcommand{\xPred}[1]{\xHat_{\pred{#1}}}
\newcommand{\xUpd}[1]{\xHat_{\upd{#1}}}
\newcommand{\xSmooth}[1]{\xHat_{\smoothCurr{#1}}}
\renewcommand{\P}{\mat{P}}
\newcommand{\priorCov}{\hat{\P}_{1 \mid 0}}
\newcommand{\PHat}{\hat{\P}}
\newcommand{\PPred}[1]{\PHat_{\pred{#1}}}
\newcommand{\PUpd}[1]{\PHat_{\upd{#1}}}
\newcommand{\PSmooth}[1]{\PHat_{\smoothCurr{#1}}}
\newcommand{\procA}{\mat{F}}
\newcommand{\procAIt}[1]{\iterQty{\mat{F}}{#1}}
\newcommand{\procB}{\vek{b}}
\newcommand{\procLinErr}{\err{\omega}}
\newcommand{\procLinCov}{\covMat{\Omega}}
\newcommand{\procLinCovIt}[1]{\iterQty{\procLinCov}{#1}}
\newcommand{\measH}{\mat{H}}
\newcommand{\measHIt}[1]{\iterQty{\mat{H}}{#1}}
\newcommand{\measC}{\vek{c}}
\newcommand{\measLinErr}{\err{\gamma}}
\newcommand{\measLinCov}{\covMat{\Gamma}}
\newcommand{\measLinCovIt}[1]{\iterQty{\measLinCov}{#1}}
\newcommand{\Q}{\mat{Q}}
\newcommand{\pdf}{PDF\xspace}
\newcommand{\innCov}{\mat{\Sigma}}
\newcommand{\R}{\mat{R}}
\newcommand{\K}{\mat{K}}
\newcommand{\iterQty}[2]{#1^{(#2)}}
\newcommand{\xHatIt}[1]{\iterQty{\xHat}{#1}}
\newcommand{\xSeq}{\x_{1:\tsFin}}
\newcommand{\xHatItSeq}[1]{\iterQty{\xHat_{1:\tsFin}}{#1}}
\newcommand{\PHatItSeq}[1]{\iterQty{\PHat_{1:\tsFin}}{#1}}
\newcommand{\PHatIt}[1]{\iterQty{\PHat}{#1}}
\newcommand{\PHatItInv}[1]{\left(\PHat^{(#1)}_{\ts}\right)^{-1}}
\newcommand{\costFn}{L}
\newcommand{\stateBar}{\bar{\x}}
\newcommand{\measBar}{\bar{\vek{\meas}}}
\newcommand{\PsiSlr}[1]{\mat{\Psi}_{#1}}
\newcommand{\PhiSlr}[1]{\mat{\Phi}_{#1}}
\newcommand{\ThetaSeq}{\Theta_{1:\tsFin}}
\newcommand{\xHatSeqSmooth}[1]{\xHat_{1:\tsFin}^{#1}}
\newcommand{\PHatSeqSmooth}[1]{\PHat_{1:\tsFin}^{#1}}
\newcommand{\affAppParam}[1]{\iterQty{\ThetaSeq}{#1}}
\newcommand{\gnObj}{\costFn_{\mathrm{\gn}}}
\newcommand{\gnObjLin}{\tilde{\costFn}_{\mathrm{\gn}}^{(i)}}
\newcommand{\gnR}{\rho}
\newcommand{\gnRIt}[1]{\iterQty{\gnR}{#1}}
\newcommand{\gnRLin}{\tilde{\gnR}^{(i)}}
\newcommand{\augObs}[1]{\vek{z}_{#1}}
\newcommand{\augObsApprox}[1]{\tilde{\vek{z}}_{#1}^{(i)}}
\newcommand{\augQ}{\mat{\Sigma}}
\newcommand{\augQInv}{\augQ^{-1}}
\newcommand{\augQInvSqrt}{\augQ^{-1/2}}
\newcommand{\augQInvSqrtT}{\augQ^{-T/2}}
\newcommand{\lsIeks}{LS--\ieks}
\newcommand{\lsIpls}{LS--\ipls}
\newcommand{\searchDir}[1]{\Delta \xHatIt{#1}}
\newcommand{\searchDirSeq}[1]{\Delta \xHatItSeq{#1}}
\newcommand{\lsPoint}{\xHat(\alpha)}
\newcommand{\lmRegParamIt}[1]{\iterQty{\lambda}{#1}}
\newcommand{\lmRegParamInit}{\iterQty{\lambda}{0}}
\newcommand{\regMat}{\mat{S}}
\newcommand{\lmObj}[1]{\iterQty{\costFn_{\mathrm{\lm}}}{#1}}
\newcommand{\lmObjLin}{\tilde{\costFn}_{\mathrm{\lm}}^{(i)}}
\newcommand{\gnRLM}[1]{\iterQty{\gnR_{\mathrm{\lm}}}{#1}}
\newcommand{\lmRegSqrtT}{\left[ (\lmRegParamIt{i})^{-1} \regMat^{(i)} \right]^{-T/2}}
\newcommand{\lmErr}{\err{e}}
\newcommand{\gnIeksObj}{\costFn_{\mathrm{\ieks}} }
\newcommand{\iplsLabel}{}
\newcommand{\augObsIpls}{\augObs{\iplsLabel}}
\newcommand{\augObsIplsApprox}{\augObsApprox{\iplsLabel}}
\newcommand{\augQIplsInv}{\augQInv_{\augObsIpls}}
\newcommand{\augQIplsInvSqrt}{\augQInvSqrt_{\augObsIpls}}
\newcommand{\augQIplsInvSqrtT}{\augQInvSqrtT_{\augObsIpls}}
\newcommand{\gnIplsObj}[1]{\iterQty{\costFn_{\mathrm{\ipls}}}{#1}}
\newcommand{\gnIplsObjLin}[1]{\iterQty{\tilde{\costFn}_{\mathrm{\ipls}}}{#1}}
\definecolor{color0}{rgb}{0.12156862745098,0.466666666666667,0.705882352941177}
\definecolor{color1}{rgb}{1,0.498039215686275,0.0549019607843137}
\definecolor{color2}{rgb}{0.172549019607843,0.627450980392157,0.172549019607843}
\definecolor{color3}{rgb}{0.83921568627451,0.152941176470588,0.156862745098039}
\definecolor{color4}{rgb}{0.15,0.8,0.15}
\definecolor{color5}{rgb}{0.15,0.15,0.15}
\title{Levenberg--Marquardt and Line-Search Iterated Posterior Linearisation Smoothing}
\author[1]{Jakob Lindqvist}
\author[2]{Simo S\"{a}rkk\"{a}}
\author[3]{\'{A}ngel~F.~Garc{\'i}a-Fern{\'a}ndez}
\author[4]{Matti Raitoharju}
\author[1]{Lennart Svensson}
\affil[1]{Dept. of Electrical Engineering, Chalmers University of Technology, Gothenburg, Sweden}
\affil[2]{Dept. of Electrical Eng. and Automation, Aalto University, Esbo, Finland}
\affil[3]{ETSI de Telecomunicaci\'on, Universidad Polit\'ecnica de Madrid, Madrid, Spain}
\affil[4]{Faculty of Information Technology and Communication Sciences, Tampere University, Tampere, Finland}
\date{} % Removes date
\begin{document}
\maketitle

\begin{abstract}
	This paper considers the problem of iterative Bayesian smoothing in nonlinear state-space models with additive noise using Gaussian approximations.
	Iterative methods are known to improve smoothed estimates but are not guaranteed to converge, motivating the development of methods with better convergence properties.
	The aim of this article is to extend Levenberg--Marquardt (LM) and line-search versions of the classical iterated extended Kalman smoother (IEKS) to the iterated posterior linearisation smoother (IPLS).
	The IEKS has previously been shown to be equivalent to the Gauss--Newton (GN) method.
	We derive a similar GN interpretation for the IPLS and use this to develop extensions to the IPLS, with improved convergence properties.
	We show that an LM extension for the IPLS can be achieved with a simple modification of the smoothing iterations, enabling algorithms with efficient implementations.
	We also derive the Armijo--Wolfe step length conditions for the IPLS enabling an efficient inexact line-search method.
	Our numerical experiments show the benefits of these extensions in highly nonlinear scenarios.
\end{abstract}
% \begin{keyword}
% Levenberg--Marquardt \sep line-search \sep smoothing \sep posterior linearisation
% \end{keyword}
% note that keywords are not normally used for peerreview papers.

\section{Introduction}
% Motivation
Smoothing is a form of state estimation, where past states of a stochastically evolving process are estimated from a noisy measurement sequence.
It has wide-ranging applications in navigation, target tracking and communications \citep{app:nav_and_track,app:audio}.
From a Bayesian perspective, the standard objective is to obtain the posterior probability density function (\pdf) of past states given all the measurements, called fixed-interval smoothing,
or the marginal \pdf for a specific state, given all measurements, called fixed-point smoothing \citep{bayes_filter_book,opt_state_est_book}.

% Gauss RTS family
General Gaussian Rauch--Tung--Striebel (\rts) smoothers form a family of methods which utilises the problem structure to efficiently calculate a Gaussian marginal posterior distribution over the states.
The name stems from the \rts smoother which, for linear/affine and Gaussian systems, computes the exact smoothing distributions \citep{og_rts,bayes_filter_book}.
For non-linear systems, general Gaussian \rts smoothers use a linear approximation for the nonlinearities including the covariance matrix of the linearisation error,
to then apply the closed-form \rts smoothing on the approximated system \citep{smoothing_non_lin_state_space_models}.
The choice of linearisation method, including the covariance matrix of the linearisation error, defines different members of this smoother family and the quality of the smoothing approximation.
Examples include first-order Taylor expansion in the \textit{Extended Kalman Smoother} \eks \citep{bayes_filter_book}
and \textit{statistical linear regression} (\slr) \citep{slr_results} with sigma point methods for the \textit{Unscented \rts smoother} \citep{ukf,urtss} and the \textit{Cubature \rts smoother} \citep{crtss}.

% Iterative methods
The point around which linearisation is done can greatly impact the resulting state estimates.
In general, we prefer to perform linearisation around points close to the posterior estimates, motivating iterative extensions of the aforementioned smoothers \citep{pls}.
A full smoothed trajectory is repeatedly computed, with linearisations done around the most recent estimates.
The iterative refinement can lead to significantly improved performance.
Examples of iterative methods are the \textit{Iterated} \eks (\ieks) and the \textit{Iterated posterior linearisation smoother} (\ipls) \citep{app:nav_and_track,pls}.

% Regularisation
To improve the convergence properties of iterative methods, damping or regularisation can be used.
A common approach is to relate the smoothing procedure to an optimisation method, such as Gauss--Newton (\gn) \citep{bellaire,damped_plf,bell_iekf,bell_ieks,map_est,ekf_mods_opt}.
Regularised versions can then be created to guarantee a non-increasing cost function, such as the Levenberg--Marquardt (\lm) method, an extension of the \gn method \citep{num_opt}.
Existing works have investigated iterative extensions, in particular for the related filtering problem.
A more numerically stable update step for the \iekf, as well as the use of \lm regularisation was proposed in \citep{bellaire}.
The \textit{Damped \iplf (\diplf)} \citep{damped_plf} also used a line-search \gn algorithm to improve the \textit{Iterated posterior linearisation filter} (\iplf).
Similar \lm extensions as explored in this paper were also developed in \citep{lm_ieks_1,lm_ieks_2,lm_observation} for analytically linearised smoothers.

\section{Problem formulation}
\noindent In smoothing, we consider the problem of estimating a sequence of latent states in a Markov process
$\xSeq \coloneqq \left(\x_{1}, \x_{2}, \dots, \x_{\tsFin} \right)$, where $\x_{\ts} \in \reals^{\dimState},\ \ts = 1, \dots, \tsFin$,
based on noisy measurements
$\measSeq \coloneqq \left(\meas_{1}, \meas_{2}, \dots, \meas_{\tsFin} \right)$, where $\meas_{\ts} \in \reals^{\dimMeas},\ \ts = 1, \dots, \tsFin$, and $\tsFin$ is the final time step.
% A lower-case letter, e.g., $\vek{x}$, denotes a column vector and capitalised letters, e.g., $\mat{P}$, denote matrices.
%TODO: Note notational ambiguity.

We approximate the distribution of the states as Gaussian, parameterised by their state means and covariances for every time step $\ts = 1, \dots, \tsFin$.
We use the notation $\xHat_{\ts \mid \ts'}, \PHat_{\ts \mid \ts'}$ for the state mean and covariance estimates at time step $\ts$ based on the measurements up to and including time step $\ts'$.
For smoothing, the aim is to estimate the \textit{smoothed} mean $\xSmooth{\ts}$ and covariance $\PSmooth{\ts}$, for all timesteps $\ts$, given all measurements $\measSeq$.

The state-space model is specified by its \textit{motion} and \textit{measurement} models
\begin{align}
  \label{eq:def:state_space_model}
  \x_{\ts+1} &= \procFn_{\ts}(\x_{\ts}) + \procNoise_{\ts},
  % &\procFn_{\ts}: \reals^{\dimState} \to \reals^{\dimState}
  &\procNoise_{\ts} \sim \normal(\vek{0}, \procCov_{\ts}), \nonumber \\
  \meas_{\ts} &= \measFn_{\ts}(\x_{\ts}) + \measNoise_{\ts},
  % &\measFn_{\ts}: \reals^{\dimState} \to \reals^{\dimMeas}
  &\measNoise_{\ts} \sim \normal(\vek{0}, \measCov_{\ts}),
\end{align}
where $\procFn_{\ts}: \reals^{\dimState} \to \reals^{\dimState}, \procCov_{\ts} \in \reals^{\dimState \times \dimState}, \measFn_{\ts}: \reals^{\dimState} \to \reals^{\dimMeas}$ and $\measCov_{\ts} \in \reals^{\dimMeas \times \dimMeas}$ are assumed to be known
and the process and measurement noises, $\procNoise_{\ts} \in \reals^{\dimState}$ and $\measNoise_{\ts} \in \reals^{\dimMeas}$, are assumed to be independent.
The initial prior $\x_{1} \sim \normal(\priorMean, \priorCov)$ is assumed to be known.

An important special case of space-space models is the linear (affine) Gaussian model.
In this paper, we approximate motion and measurement models on the form:
\begin{align}
  \label{eq:def:lin:state_space_model}
  \x_{\ts+1} &= \procA_{\ts} \x_{\ts} + \procB_{\ts} + \procLinErr_{\ts} + \procNoise_{\ts},
  % &\procA_{\ts} \in \reals^{\dimState \times \dimState},\,
  % \procB_{\ts} \in \reals^{\dimState},\,
    &\procLinErr_{\ts} \sim \normal(\vek{0}, \procLinCov_{\ts}), \nonumber \\
  \meas_{\ts} &= \measH_{\ts} \x_{\ts} + \measC_{\ts} + \measLinErr_{\ts} + \measNoise_{\ts},
  % &\measH_{\ts} \in \reals^{\dimMeas \times \dimState},\,
  % \measC_{\ts} \in \reals^{\dimMeas},\,
    &\measLinErr_{\ts} \sim \normal(\vek{0}, \measLinCov_{\ts}),
\end{align}
where for all time steps,
$\procA_{\ts} \in \reals^{\dimState \times \dimState}, \procB_{\ts} \in \reals^{\dimState}$ and
$\measH_{\ts} \in \reals^{\dimMeas \times \dimState}, \measC_{\ts} \in \reals^{\dimMeas}$ constitute the linear mappings and
the noise processes $\procNoise_\ts$, $\procLinErr_\ts$, $\measNoise_\ts$, and $\measLinErr_\ts$ are white noises, mutually independent, and independent of the initial state $\x_1$ \cite{opt_state_est_book,opt_filter_book}.
% the random variables $\procNoise_{\ts}, \procLinErr_{\ts}, \measNoise_{\ts}$ and $\measLinErr_{\ts}$ are mutually independent.
For such systems, the linear \rts smoother computes the exact marginal posterior \pdf in closed-form \cite{bayes_filter_book}.

General Gaussian \rts smoothers can handle non-linear/non-Gaussian systems.
These smoothers perform two steps to obtain a tractable smoothing algorithm \cite{bayes_filter_book,smoothing_non_lin_state_space_models}.
First, they approximate the original models in \cref{eq:def:state_space_model} as a linear Gaussian model of the form in \cref{eq:def:lin:state_space_model}.
Second, they compute the posterior distributions for this approximate model exactly using the \rts smoother.
The family of general Gaussian \rts smoothers includes the \eks, the unscented \rts smoother and the cubature \rts smoother, and the algorithms in this family only differ in how
the linearisation parameters $\Theta_{1:\tsFin} = (\procA_{\ts}, \procB_{\ts}, \procLinCov_{\ts}, \measH_{\ts}, \measC_{\ts}, \measLinCov_{\ts}), \ts=1, \dots, \tsFin$, are chosen.

Iterative extensions of general Gaussian \rts smoothers perform repeated steps of linearisation and \rts smoothing.
The sequence of estimates produced by iterative smoothers is denoted $\xHatItSeq{i}, \PHatItSeq{i}$,
where the superscript $(i)$ indicates that these are the smoothed estimates for iteration $i$.
A superscript without parentheses refers to a special iterate, labelled by the superscript, for instance, a fixed point $\xHatSeqSmooth{*}, \PHatSeqSmooth{*}$.
The starting points are the initial estimates $\xHatItSeq{0}, \PHatItSeq{0}$, which form the basis for selecting the initial linearisation parameters $\affAppParam{0}$.
We iteratively find new estimates $\xHatItSeq{i}, \PHatItSeq{i}$ with closed form \rts smoothing, and use them to select new linearisation parameters $\affAppParam{i+1}$.
Ideally, the iterative process is repeated until convergence.
%, in the sense of finding stable state estimates that explain the data well.
We say that the process does not converge if it diverges or if the resulting estimates explain data poorly or are unreasonable.

In this paper, we propose two versions of such iterated smoothers that converge for a large set of initial estimates and measurement realisations.
%We refer to the non-iterative and iterative methods as prior and posterior linearisation smoothers respectively
%(the term was coined in \cite{pls}, referring specifically to the \slr based \ipls but we use it, more broadly, for any iterative general Gaussian \rts smoother).
% The \ipls on the other hand uses the information in the measurement (albeit from a previous iteration) and will, ideally, converge
% to produce the best approximation w.r.t. posterior distribution.
% The \ieks in contrast, linearises only w.r.t. to the mean of the distribution.

\section{Background}
% Motivate iterative smoothers
% Motivate regularisation
\subsection{Linearisations used in smoothing}
\noindent All general Gaussian \rts smoothers use the linear \rts smoother.
What sets them apart is the different methods of linearisation.

The \eks is a well-known general Gaussian \rts smoother \cite{bayes_filter_book}.
It makes the linear approximation through an analytical linearisation.
For instance, to linearise the motion model in \cref{eq:def:state_space_model}, at time step $\ts$, it uses
\begin{subequations}
  \begin{align}
    %\label{eq:extended}
    \label{eq:ext:jacobian}
    \procA_{\ts}(\xHat_{\ts}) &= J_{\procFn_{\ts}}(\xHat_{\ts}), \\
    \label{eq:ext:offset}
    \procB_{\ts}(\xHat_{\ts}) &= \procFn_{\ts}(\xHat_{\ts}) - \procA_{\ts}(\xHat_{\ts}) \xHat_{\ts}, \\
    \label{eq:ext:cov}
    \procLinCov_{\ts}(\xHat_{\ts}) &= \vek{0},
  \end{align}
\end{subequations}
where $J_{\procFn_\ts}(\xHat)$ is the Jacobian of $\procFn_\ts$, evaluated at $\xHat$.
Linearisation is done at some estimate of the mean of the state $\xHat_{\ts}$.
For instance, the ordinary \eks uses the updated means $\xUpd{\ts}$.
%, whereas the \ieks uses the estimates from the previous iteration $\xHatIt{i}_{\ts}$.

Another category of smoothers uses \slr to form the linear approximation,
such as the unscented \rts and cubature \rts smoothers.
The \slr for $\procFn_{\ts}$ with respect to the density $p(\x_{\ts})$, with mean $\xHat_{\ts}$ and covariance $\PHat_{\ts}$, provides the parameters \cite{slr_results}
\begin{subequations}
  \begin{align}
    \label{eq:slr:jacobian}
    \procA_{\ts}(\xHat_{\ts}, \PHat_{\ts}) &= \PsiSlr{\procFn_{\ts}}^{\top} \PHat_{\ts}^{-1}, \\
    \label{eq:slr:offset}
    \procB_{\ts}(\xHat_{\ts}, \PHat_{\ts}) &= \stateBar_{\ts} - \procA_{\ts} \xHat_{\ts}, \\
    \label{eq:slr:cov}
    \procLinCov_{\ts}(\xHat_{\ts}, \PHat_{\ts}) &= \PhiSlr{\procFn_{\ts}} - \procA_{\ts}  \PHat_{\ts} \procA_{\ts}^{\top},
  \end{align}
\end{subequations}
where
\begin{align}
  \label{eq:slr:moments}
  %\label{eq:slr:z_bar}
  \stateBar_{\ts} &=
  %\expect{\procFn_{\ts}(\x_{\ts})} =
  \int \procFn_{\ts}(\x_{\ts}) p(\x_{\ts}) \d\x_{\ts}, \nonumber \\
  %\label{eq:slr:psi}
  \PsiSlr{\procFn_{\ts}}  &=
  %\cov{\x_{\ts}}{\procFn_{\ts}(\x_{\ts})} =
  \int (\x_{\ts} - \xHat_{\ts})(\procFn_{\ts}(\x_{\ts}) - \stateBar_{\ts})^{\top} p(\x_{\ts}) \d\x_{\ts}, \nonumber \\
  %\label{eq:slr:phi}
  \PhiSlr{\procFn_{\ts}} &=
  %\cov{\procFn_{\ts}(\x_{\ts})}{\procFn_{\ts}(\x_{\ts})} =
  \int (\procFn_{\ts}(\x_{\ts}) - \stateBar_{\ts})(\procFn_{\ts}(\x_{\ts}) - \stateBar_{\ts})^{\top} p(\x_{\ts}) \d\x_{\ts}.
\end{align}
When the linearisation is done with respect to \(p(\x_{\ts}) = \normal(\x_{\ts}; \xHat_{\ts}, \PHat_{\ts})\),
the \slr approximation depends on both the mean $\xHat_{\ts}$ and covariance $\PHat_{\ts}$.
The moments in \cref{eq:slr:moments} are not tractable for a general function $\procFn_{\ts}$
and some form of approximation is needed,
such as a Monte Carlo method or, more commonly, a sigma point method \cite{seq_monte_carlo,bayes_filter_book}.

% Linearisation region
For non-iterative methods such as the \eks, the linearisation is done at the predicted and filtered estimates for both the filtering and smoothing passes.
With non-linear motion or measurement models, the risk is that the linearisation is a poor fit for the models over the region of interest.
Furthermore, the choice of linearisation point based on the predicted mean $\xPred{\ts}$ is not informed by the measurements $\meas_{\ts:\tsFin}$.
%In effect, they select the parameters $\ThetaSeq$ with respect to the predicted and filtered estimates of the states.
Previous work has noted that the approximation is sensitive to the linearisation point when the model is non-linear and the measurement noise is low \cite{prlf_bad_low_error}.

In the posterior linearisation approach \cite{pls}, we choose the optimal linearisation given the sequence of measurements and the resulting mean square error.
For the motion model in \cref{eq:def:state_space_model}, at time step $\ts$, we have
\begin{subequations}
\begin{align}
  \label{eq:linear_obj}
  &(\procA_{\ts}, \procB_{\ts}) = \underset{\procA_{\ts}^+, \procB_{\ts}^+}{\min\,} \expect{\twoNorm{\procFn_{\ts}(\x_{\ts}) - \procA_{\ts}^+ \x_{\ts} - \procB_{\ts}^+}^2  \mid \measSeq}, \\
  \label{eq:linear_obj:error_cov}
  \procLinCov_{\ts} =  &\expect{
    \left( \procFn_{\ts}(\x_{\ts}) - \procA_{\ts} \x_{\ts} - \procB_{\ts} \right)
    \left( \procFn_{\ts}(\x_{\ts}) - \procA_{\ts} \x_{\ts} - \procB_{\ts} \right)^{\top} \mid \measSeq
  }.
\end{align}
\end{subequations}
However, we cannot directly select parameters with respect to the unknown true posterior distribution.
% The intuition is that, with a low-noise measurement, the posterior \pdf will be narrow and any linearisation done with respect to to the prior distribution will risk being inaccurate for the region where the posterior is large.
%This can be seen as linearising around the prior mean, rather than the posterior.

Iterative smoothers take this insight into account to improve estimation performance.
By iteratively refining the linearisation in \cref{eq:def:lin:state_space_model}, the algorithms improve the estimates by using successively better linearisations.
Given estimates of the posterior moments $\xHatItSeq{i}, \PHatItSeq{i}$, we obtain a new linear approximation $\affAppParam{i+1}$,
from which new estimates of the moments $\xHatItSeq{i+1}, \PHatItSeq{i+1}$ are obtained using \rts smoothing \cite{bayes_filter_book}.
In summary, the following two-step process is iterated:
\begin{align}
  \label{eq:iterative_smoothers:iter}
  \affAppParam{i+1} &= \text{Linearisation}\left(\xHatItSeq{i}, \PHatItSeq{i}\right), \nonumber \\
  \left(\xHatItSeq{i+1}, \PHatItSeq{i+1}\right) &= \text{\rts smoother}\left(\affAppParam{i+1}, \measSeq \right).
\end{align}
The initial estimates of the moments are commonly, but not necessarily, the output of the corresponding non-iterative smoother.
Hopefully, with every iteration, the estimates of the posterior grow closer to the true posterior until the linearisation point is chosen with respect to the true posterior distribution.

% One or two paragraphs?
The \ieks is a well-known iterative extension of the \eks \cite{bell_ieks}.
In the \ieks, the linearisation step in \cref{eq:iterative_smoothers:iter} is done with a first-order Taylor expansion around the estimated means $\xHatItSeq{i}$ from the previous iteration.
The \ipls is a more recent iterative smoother, first introduced in \cite{pls}.
In \ipls, $\affAppParam{i+1}$ is selected by performing \slr as in \cref{eq:slr:jacobian,eq:slr:offset,eq:slr:cov} and \cref{eq:slr:moments} with $p(\x_{\ts}) = \normal(\x_{\ts}; \xHatIt{i}_{\ts}, \PHatIt{i}_{\ts})$.

%This is why we can interpret the \ieks as finding the \map estimate and conversely,
%the \ipls as indeed the \textit{posterior} linearisation smoother.
\subsection{The Gauss--Newton (\gn) method}
It is useful to view the iterated smoothers as optimisation algorithms, by identifying a cost function that they minimise.
An important advantage with this is that it enables us to make use of more general optimisation methods, with better convergence properties.
% Formulated as a general optimisation problem, many techniques can be applied but the smoothers offer the advantage of exploiting the problem structure.

The Gauss--Newton (\gn) method \cite{num_opt}, is a well-known optimisation method which is used to solve problems on the form
\begin{align}
  \label{eq:gn:problem_form}
  \x^{*} &= \argMin{x}\, \gnObj(\x) = \argMin{x}\, \frac{1}{2} \twoNorm{\gnR(\x)}^2
  = \argMin{x}\, \frac{1}{2} \gnR(\x)^{\top} \gnR(\x),
\end{align}
where $\gnR(\x)$ is a given function.
Starting from an initial guess, the method iteratively finds the exact solution to the approximate objective
\begin{equation}
  \label{eq:gn:cost_fn_approx}
  \gnObjLin(\x) = \frac{1}{2} \twoNorm{\gnRLin(\x)}^2 = \frac{1}{2} \gnRLin(\x)^{\top} \gnRLin(\x),
\end{equation}
defined by the first order approximation of $\gnR(\cdot)$ around $\xHatIt{i}$
\begin{equation}
  \label{eq:gn:lin}
  \gnR(\x) \approx \gnRLin(\x) \coloneqq \gnR(\xHatIt{i}) + J_{\gnR}(\xHatIt{i}) (\x - \xHatIt{i}).
\end{equation}
Linearisation is done around the current iterate $\xHatIt{i}$ and the next iterate is the solution to the approximate problem \cref{eq:gn:cost_fn_approx} \cite{num_opt}.

% Iterative smoothers can find better trajectories by improving the estimates through more accurate linearisations.
% By the same logic, highly non-linear functions can cause a linearisation, inaccurate enough, that the sequence of estimates diverges.
\subsection{Levenberg--Marquardt regularisation}
The \gn method is not guaranteed to converge.
Instead, like many iterative methods, it can diverge or converge to a poor solution.
An extension of the \gn method with better convergence properties is the Levenberg--Marquardt method \cite{levenberg,marquardt,num_opt}.
In the \lm method, the standard cost function $\gnObj$ is extended with a regularisation term
\begin{align}
    \label{eq:lm:cost_fn}
    \lmObj{i}(\x) = \gnObj(\x)
    + \frac{1}{2} \lmRegParamIt{i} (\x - \xHatIt{i})^{\top} \left[ \regMat^{(i)}\right]^{-1} (\x - \xHatIt{i}),
\end{align}
where $\lmRegParamIt{i} > 0$ is an adaptable regularisation parameter and $\regMat^{(i)}$ is a sequence of positive definite regularisation matrices.
The matrices $\regMat^{(i)}$ can be selected to scale the problem suitably \cite{nonlinear_regression,num_opt}.
In this paper, we assume that these matrices are given while $\lmRegParamIt{i}$ is adapted as part of the optimisation algorithm.
The regularisation term encourages a new iterate to be close to the previous one, hopefully in a region where the approximation is acceptable.
A new iterate is accepted only if it decreases the cost function.
The level of regularisation is controlled by adapting $\lmRegParamIt{i}$ by
reducing $\lmRegParamIt{i}$ when an iterate is accepted and increasing it on rejection.
We introduce $\nu > 1$ as a parameter to control the adaptation and increase or reduce $\lmRegParamIt{i}$ with a factor $\nu$ on accepting or rejecting an iterate respectively \cite{marquardt,lm_adapt}.

\subsection{The \ieks as a \gn method}
\label{sec:background:ieks}
\noindent The \ieks can be interpreted as an optimisation method \cite{bell_ieks}.
Consider the problem of minimising the cost function
\begin{align}
  \label{eq:gn_ieks:cost_fn}
  &\gnIeksObj(\xSeq)
  = \frac{1}{2}\Big( (\x_1 - \priorMean)^{\top} \priorCov^{-1} (\x_1 - \priorMean) \nonumber \\
  &\quad+ \sum_{\ts=1}^\tsFin (\meas_{\ts} - h_{\ts}(\x_{\ts}))^{\top} \R_{\ts}^{-1} (\meas_{\ts} - h_{\ts}(\x_{\ts})) \nonumber \\
  &\quad+ \sum_{\ts=1}^{\tsFin-1} (\x_{\ts+1} - f_{\ts}(\x_{\ts}))^{\top} \Q_{\ts}^{-1} (\x_{\ts+1} - f_{\ts}(\x_{\ts})) \Big).
\end{align}
\gn optimisation of this function is equivalent to running the \ieks for the corresponding state-space model.
This was first shown in \cite{bell_ieks}, and can be stated as follows:

\begin{prop}
  \label[prop]{thm:gn_ieks}
  The \ieks inference of the state-space model in \cref{eq:def:state_space_model} is a \gn method for minimising the function $\gnIeksObj$ in \cref{eq:gn_ieks:cost_fn}.
\end{prop}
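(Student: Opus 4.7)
The plan is to identify $\gnIeksObj$ as a sum of squared weighted residuals, then show that one \gn step on this stacked residual vector is exactly the smoothed mean produced by one \ieks iteration. First, I would collect the three quadratic terms in \cref{eq:gn_ieks:cost_fn} by stacking the whitened prior innovation $\priorCov^{-1/2}(\x_1-\priorMean)$, the whitened measurement innovations $\R_{\ts}^{-1/2}(\meas_{\ts}-h_{\ts}(\x_{\ts}))$, and the whitened motion residuals $\Q_{\ts}^{-1/2}(\x_{\ts+1}-f_{\ts}(\x_{\ts}))$ into a single vector-valued function $\gnR(\xSeq)$, so that $\gnIeksObj(\xSeq) = \tfrac{1}{2}\gnR(\xSeq)^{\top}\gnR(\xSeq)$. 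This puts the cost in the form \cref{eq:gn:problem_form} required by \gn.

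Next, I would compute the Jacobian $J_{\gnR}(\xHatItSeq{i})$, which is block-sparse: its nonzero blocks involve only $J_{\procFn_{\ts}}(\xHatIt{i}_{\ts})$ and $J_{\measFn_{\ts}}(\xHatIt{i}_{\ts})$ scaled by the whitening factors. The \gn subproblem in \cref{eq:gn:cost_fn_approx,eq:gn:lin} is therefore a weighted linear least-squares problem in $\xSeq$. I would then reinterpret this problem probabilistically: up to a constant, its objective is the negative log-posterior of the affine Gaussian state-space model \cref{eq:def:lin:state_space_model} obtained by the Taylor linearisation \cref{eq:ext:jacobian,eq:ext:offset,eq:ext:cov} of $\procFn_{\ts}$ and $\measFn_{\ts}$ at $\xHatIt{i}_{\ts}$. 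Its unique minimiser is the \map trajectory of this linear Gaussian model.

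Finally, I would identify this \map trajectory with the \ieks output. For a linear Gaussian state-space model, the joint \map trajectory coincides with the sequence of marginal posterior means, which are precisely what the linear \rts smoother computes in closed form. By the definition of the \ieks iteration in \cref{eq:iterative_smoothers:iter}, the linearisation in the current iterate followed by an \rts pass on the resulting linear model produces exactly $\xHatItSeq{i+1}$, so it equals the \gn update.

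The main obstacle is this last identification between the solution of the block-tridiagonal normal equations and the forward-backward \rts recursions. A clean route is to invoke the standard equivalence between batch weighted least-squares inference and the Kalman/\rts recursions for linear Gaussian state-space models; a more self-contained alternative would be to exploit the Markov-induced block-tridiagonal sparsity of $J_{\gnR}^{\top}J_{\gnR}$ and show by induction over $\ts$ that forward elimination reproduces the Kalman filter update and backward substitution reproduces the \rts smoothing recursion. Either route closes the argument, completing the proposition.
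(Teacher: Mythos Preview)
Your proposal is correct and follows essentially the same three-step structure the paper sketches: write $\gnIeksObj$ as $\tfrac{1}{2}\lVert\gnR\rVert^2$ with stacked whitened residuals, linearise $\gnR$ at $\xHatItSeq{i}$, and recognise the resulting quadratic as the negative log-posterior of the Taylor-linearised state-space model, so that the closed-form \gn minimiser coincides with the \rts output. You are somewhat more explicit than the paper about the final identification (joint \map equals marginal means equals \rts smoother output for linear Gaussian models), which the paper simply takes as given; this is a helpful elaboration but not a different argument.
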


The idea of the proof is to linearise the cost function in \cref{eq:gn_ieks:cost_fn}
and compare it to the corresponding linearised IEKS state-space model.
Observing that GN and the IEKS exactly solve the same approximate problem,
we conclude that they are equivalent.
This proof structure is outlined in \cref{fig:props_overview} and we will reuse it when proving a similar connection between \gn and the \ipls.
\begin{figure}
  \centering
  \tikzstyle{var} = [
  rectangle,
  draw,
  text width=5em,
  text centered
]
\tikzstyle{line} = [midway, font=\small]
% , draw, -latex'
% \begin{tikzpicture}[node distance = 3cm, auto]
\begin{tikzpicture}[scale=0.8, node distance = 3cm, auto]
  % Place nodes
  \node [var] (L) {$\costFn = \frac{1}{2} \twoNorm{\gnR}^2$};
  \node [var, right=3.25cm of L] (LTilde) {$\tilde{\costFn}$};
  \node [var, below=1cm of L] (ssm) {State-space model};
  \node [var, below=0.85cm of LTilde] (assm) {Linear state-space model};
  % Draw edges
  \draw [->] (L) -- (LTilde) node [line] {Linearising $\gnR$};
  \draw [->] (ssm) -- (assm) node [line] {Linearising models};
  \draw [dashed, <->] (assm) -- (LTilde) node [line] {Neg. log. likelihood};
\end{tikzpicture}
  \caption{
    \label{fig:props_overview}
    Connection between \gn optimisation and a general Gaussian smoother.
    Both the \gn algorithm and the general Gaussian smoothers work by 1) linearising the problem and 2) solving the linearised problem analytically.
    The proofs of \cref{thm:gn_ieks,thm:gn_ipls} show that the linearised problems are identical.
    %This implies that the problems must have the same closed form solution and the algorithms therefore are equivalent.
  }
\end{figure}

\section{\lmIpls}
\subsection{\gn cost function}
\label{sec:our_method:ipls}
\noindent Inspired by the \ieks, we seek to establish a similar connection between the \ipls and the \gn method.
To this end, we require a cost function that leads to a \gn method, corresponding to the \ipls.

A key difference between the \ieks and the \ipls is the role the covariances play.
In the \ieks, the analytical linearisation is done only with respect to the estimated means $\xHatItSeq{i}$, whereas the \slr linearisation of the \ipls is based on both the estimated means $\xHatItSeq{i}$ and covariances $\PHatItSeq{i}$.
The \ieks further assumes that the linearisation error covariances are zero, whereas the \ipls estimates them as $\procLinCovIt{i}_{\ts}$ and $\measLinCovIt{i}_{\ts}$ respectively.
These matrices appear in the approximate state-space model of \cref{eq:def:lin:state_space_model} and must therefore be included in a cost function.
The estimated covariances $\PHatItSeq{i}$ are implicitly included in the approximate state-space model, since they are used in the \slr linearisation.

The inclusion of the covariance matrices complicates the construction of a matching \gn objective, which should be on a quadratic form in the state sequence $\xSeq$ (or some function of it).
Note that the state sequence $\xSeq$ is the optimisation variable and that the solution to the optimisation problem becomes the state estimates $\xHatItSeq{i+1}$.
The matrix defining the quadratic form will depend on the (estimated means of the) state sequence $\xHatItSeq{i}$ and the cost function will also, implicitly, depend on the covariance sequence $\PHatItSeq{i}$.

To establish the link between the \gn method and \ipls, we construct a cost function for the sequence of states fixing the covariances $\procLinCovIt{i}_{1:\tsFin}$, $\measLinCovIt{i}_{1:\tsFin}$ and $\PHatItSeq{i}$:
\begin{align}
  \label{eq:ipls:gn:cost_fn}
  &\gnIplsObj{i}(\xSeq)
  = \frac{1}{2} \Big( (\x_1 - \priorMean)^{\top} \priorCov^{-1} (\x_1 - \priorMean) \nonumber \\
  &+ \sum_{\ts=1}^{\tsFin-1} (\x_{\ts+1} - \stateBar_{\ts}(\x_{\ts}))^{\top} \left(\procCov_{\ts} + \procLinCovIt{i}_{\ts} \right)^{-1} (\x_{\ts+1} - \stateBar_{\ts}(\x_{\ts})) \nonumber \\
  &+ \sum_{\ts=1}^\tsFin (\meas_{\ts} - \measBar_{\ts}(\x_{\ts}))^{\top} \left( \measCov_{\ts} + \measLinCovIt{i}_{\ts} \right)^{-1} (\meas_{\ts} - \measBar_{\ts}(\x_{\ts})) \Big),
\end{align}
where $\stateBar(\cdot), \measBar(\cdot)$ the \slr estimated expectations of $\procFn_{\ts}$ and $\measFn_{\ts}$ in \cref{eq:slr:moments}
% \begin{subequations}
% \begin{align}
%   \label{eq:ipls:state_bar}
%   \stateBar_{\ts}(\x_{\ts}) &= \int \procFn_{\ts}(\x) \normal(\x; \x_{\ts}, \PHatIt{i}_{\ts}) \d \x,\\
%   \label{eq:ipls:meas_bar}
%   \measBar_{\ts}(\x_{\ts}) &= \int \measFn_{\ts}(\x) \normal(\x; \x_{\ts}, \PHatIt{i}_{\ts}) \d \x,
% \end{align}
% \end{subequations}
% Same line to avoid a space indent
and $\procLinCovIt{i}_{\ts}, \measLinCovIt{i}_{\ts}$ are computed using \cref{eq:slr:cov} with respect to $\xHatItSeq{i}, \PHatItSeq{i}$ using $\procFn_{\ts}$ and $\measFn_{\ts}$ in \cref{eq:slr:moments} respectively.

An important property is that the cost function in \cref{eq:ipls:gn:cost_fn} depends on the most recent estimate of the sequence of covariance matrices,
both through the \slr expectations in \cref{eq:slr:moments} and through the estimated linearisation errors $\procLinCovIt{i}_{\ts}$ and $\measLinCovIt{i}_{\ts}$.

Before we derive the connection between \gn and \ipls we need to derive an expression for the gradient of our proposed cost function.
\begin{lemma}
  \label[lemma]{thm:gn_ipls:cost_fn:grad}
  The gradient of the \ipls cost function in \cref{eq:ipls:gn:cost_fn} is
  \begin{align}
    \label{eq:ipls:gn:cost_fn:grad}
    &\grad \gnIplsObj{i}(\xSeq) = J_{\gnRIt{i}}(\xSeq)^{\top} \gnRIt{i}(\xSeq),
  \end{align}
  where
  \begin{align}
  J_{\gnRIt{i}}(\xSeq)^{\top}% \nonumber \\
  &= \begin{pmatrix}
    (\procAIt{i}(\xSeq))^{\top} & (\measHIt{i})(\xSeq))^{\top}
  \end{pmatrix} \augQIplsInvSqrt,
  \end{align}
where $\procAIt{i}(\xSeq)$ and $\measHIt{i}(\xSeq)$ are given by
\cref{eq:def:ipls:proc_jacobian_seq} and \cref{eq:def:ipls:meas_jacobian_seq} in the appendix,
which also contains the proof of \cref{thm:gn_ipls:cost_fn:grad}.
\end{lemma}

We can then go on to present the main result of this section.
\begin{prop}
  \label[prop]{thm:gn_ipls}
The output of one iteration of \gn optimisation of the cost function $\gnIplsObj{i}(\xSeq)$ in \cref{eq:ipls:gn:cost_fn}, defined by the current \gn estimate $\xHatItSeq{i}$ and the covariance matrices $\PHatItSeq{i}$, is the same as the means estimated by \rts smoothing of \cref{eq:def:state_space_model}, linearised with \slr around $\xHatItSeq{i}, \PHatItSeq{i}$, that is, one iteration of the \ipls.
%With $\PHatIt{i}_{\ts}, \procLinCovIt{i}_{\ts}, \measLinCovIt{i}_{\ts}$ fixed for all $k = 1, \dots, \tsFin$,
%\ipls inference of the state-space model in \cref{eq:def:state_space_model} is a \gn method for minimising the cost function in \cref{eq:ipls:gn:cost_fn}.
\end{prop}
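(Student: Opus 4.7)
The plan is to follow the three-step template of \cref{fig:props_overview} used for \cref{thm:gn_ieks}. First, I would express $\gnIplsObj{i}$ in the form $\frac{1}{2}\twoNorm{\gnR(\xSeq)}^2$ by stacking the three groups of Cholesky-weighted residuals visible in \cref{eq:ipls:gn:cost_fn}: a prior term $\priorCov^{-1/2}(\x_1 - \priorMean)$, process residuals $(\procCov_{\ts} + \procLinCovIt{i}_{\ts})^{-1/2}(\x_{\ts+1} - \stateBar_{\ts}(\x_{\ts}))$ for $\ts = 1, \dots, \tsFin-1$, and measurement residuals $(\measCov_{\ts} + \measLinCovIt{i}_{\ts})^{-1/2}(\meas_{\ts} - \measBar_{\ts}(\x_{\ts}))$ for $\ts = 1, \dots, \tsFin$. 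Crucially, during the $i$-th \gn iteration the weighting matrices and the functions $\stateBar_{\ts}, \measBar_{\ts}$ are all frozen at the estimates $\xHatItSeq{i}, \PHatItSeq{i}$, so the only $\xSeq$-dependence inside the squared norm enters through the two nonlinear maps $\stateBar_{\ts}$ and $\measBar_{\ts}$.

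Next I would Taylor-expand these residuals to first order at $\xHatItSeq{i}$ to build $\gnRLin$. The key technical lemma is the Jacobian identity $J_{\stateBar_{\ts}}(\xHatIt{i}_{\ts}) = \procA_{\ts}^{(i)}$. To establish it, I would differentiate \cref{eq:ipls:state_bar} under the integral using $\nabla_{\x_{\ts}} \normal(\x; \x_{\ts}, \PHatIt{i}_{\ts}) = (\PHatIt{i}_{\ts})^{-1}(\x - \x_{\ts})\, \normal(\x; \x_{\ts}, \PHatIt{i}_{\ts})$, subtract the constant-in-$\x$ mean $\stateBar_{\ts}(\xHatIt{i}_{\ts})$ to introduce the centred form $\procFn_{\ts}(\x) - \stateBar_{\ts}$, and recognise the resulting integral as $\PsiSlr{\procFn_{\ts}}^{\top} (\PHatIt{i}_{\ts})^{-1}$ via \cref{eq:slr:moments}, which by \cref{eq:slr:jacobian} equals $\procA_{\ts}^{(i)}$. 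Combining this with the identity $\stateBar_{\ts}(\xHatIt{i}_{\ts}) = \procA_{\ts}^{(i)} \xHatIt{i}_{\ts} + \procB_{\ts}^{(i)}$ implied by \cref{eq:slr:offset} collapses the Taylor expansion to $\stateBar_{\ts}(\x_{\ts}) \approx \procA_{\ts}^{(i)} \x_{\ts} + \procB_{\ts}^{(i)}$, and the same argument applied to $\measBar_{\ts}$ yields $\measBar_{\ts}(\x_{\ts}) \approx \measH_{\ts}^{(i)} \x_{\ts} + \measC_{\ts}^{(i)}$.

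Finally, I would substitute these linearised residuals into $\frac{1}{2}\twoNorm{\gnRLin}^2$. The result is a quadratic in $\xSeq$ whose weight matrices are exactly $\priorCov^{-1}$, $(\procCov_{\ts} + \procLinCovIt{i}_{\ts})^{-1}$ and $(\measCov_{\ts} + \measLinCovIt{i}_{\ts})^{-1}$, that is, the inverse noise covariances of the \slr-linearised state-space model in \cref{eq:def:lin:state_space_model}. Up to an additive constant this is the negative log joint posterior of $\xSeq$ under that linear-Gaussian model, which is precisely what the \rts pass of the \ipls (the second step of \cref{eq:iterative_smoothers:iter}) minimises in closed form. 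Since both the \gn step and the \rts smoother solve the same strictly convex quadratic analytically, their output posterior means must coincide. The main obstacle is the Jacobian identity for $\stateBar_{\ts}$ and $\measBar_{\ts}$; once that Stein/Bonnet-type differentiation-under-the-integral step is in hand, the rest reduces to term-by-term matching of a linear-Gaussian log-posterior against the linearised \gn objective, exactly as indicated by \cref{fig:props_overview}.
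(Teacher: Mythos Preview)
Your proposal is correct and follows essentially the same route as the paper: stack the Cholesky-weighted residuals into $\gnRIt{i}$, linearise via the Jacobian identity $J_{\stateBar_{\ts}}(\xHatIt{i}_{\ts}) = \PsiSlr{\procFn_{\ts}}^{\top}(\PHatIt{i}_{\ts})^{-1} = \procA_{\ts}^{(i)}$ obtained by differentiating the Gaussian under the integral and centring, and then match the resulting quadratic to the negative log-posterior of the \slr-linearised model. The paper's proof and appendix carry out exactly these steps, including the same Stein-type derivation of the Jacobian and the use of \cref{eq:slr:offset} to collapse the Taylor expansion to the affine form $\procA_{\ts}^{(i)}\x_{\ts} + \procB_{\ts}^{(i)}$.
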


\begin{proof}
The proof follows the steps outlined in \cref{fig:props_overview}.
We construct $\gnRIt{i}(\xSeq)$ such that $\gnIplsObj{i}(\xSeq) = \frac{1}{2} \twoNorm{\gnRIt{i}(\xSeq)}^2$ and then linearise $\gnRIt{i}(\xSeq)$ to form the approximate objective $\gnIplsObjLin{i}(\xSeq)$.
Secondly, the state-space model is linearised with \slr, according to the \ipls.
Finally, we compare the approximate \gn objective to the linearised state-space model and note that they correspond to the same minimisation problem.

Note that the covariance matrices $\procLinCovIt{i}_{\ts}$ and $\measLinCovIt{i}_{\ts}$ in $\gnIplsObj{i}(\xSeq)$ depend on the current estimates $\xHatItSeq{i}$ and not on the optimisation variable $\xSeq$.
Therefore, $\gnIplsObj{i}(\xSeq)$ is on the form of \cref{eq:gn:problem_form} and can be optimised with the \gn method.

We construct $\gnRIt{i}(\xSeq)$ by collecting states and measurements in a vector and grouping the covariance matrices in a single block diagonal matrix:
\begin{subequations}
\begin{align}
  \label{eq:def:ipls:aug_obs}
  \augObsIpls(\xSeq) &= \begin{pmatrix}
    \x_1 - \priorMean \\
    \x_2 - \stateBar_1(\x_1) \\
    \vdots \\
    \x_{\tsFin} - \stateBar_{\tsFin-1}(\x_{\tsFin-1}) \\
    \meas_1 - \measBar_1(\x_1) \\
    %\meas_2 - \measBar_2(\x_2) \\
    \vdots \\
    \meas_{\tsFin} - \measBar_{\tsFin}(\x_{\tsFin})
  \end{pmatrix}, \\
  \label{eq:def:ipls:augQ}
  \augQIplsInv &= \diag \Big(
  \priorCov^{-1}, (\procCov_1 + \procLinCovIt{i}_1)^{-1}, \dots, (\procCov_{\tsFin-1} + \procLinCovIt{i}_{\tsFin-1})^{-1}, \nonumber \\
                 &\hspace{1.5cm}(\measCov_1 + \measLinCovIt{i}_1)^{-1}, \dots, (\measCov_{\tsFin} + \measLinCovIt{i}_{\tsFin})^{-1} \Big).
\end{align}
\end{subequations}
Defining
\begin{equation}
  \label{eq:def:ipls:gnr}
  \gnRIt{i}(\xSeq) = \augQIplsInvSqrtT \augObsIpls(\xSeq),
\end{equation}
with $\augQIplsInvSqrt \augQIplsInvSqrtT = \augQIplsInv$.
%we confirm that
%\begin{equation*}
%  %\label{eq:ipls:cost_compact}
%  \frac{1}{2} \twoNorm{\gnRIt{i}(\xSeq)}^2 = \augObsIpls^{\top}(\xSeq) \augQIplsInv \augObsIpls(\xSeq) = \gnIplsObj{i}(\xSeq).
%\end{equation*}

Next, we linearise $\gnRIt{i}$ as the first-order approximation $\gnRLin$ around $\xHatItSeq{i}$
\begin{align}
  \label{eq:ipls:gnr:taylor}
  \gnRLin(\xSeq)
             &= \gnRIt{i}(\xHatItSeq{i}) + J_{\gnRIt{i}}(\xHatItSeq{i}) (\xSeq - \xHatItSeq{i})
             %&= \augQIplsInvSqrtT \augObsIpls(\xHatItSeq{i})
             %+ \augQIplsInvSqrtT J_{\augObsIpls}(\xSeq)(\xSeq - \xHatItSeq{i}) \nonumber \\
             = \augQIplsInvSqrtT \augObsIplsApprox(\xSeq),
\end{align}
where
\begin{align*}
  \augObsIplsApprox(\xSeq) =
    \begin{pmatrix}
    \x_1 - \priorMean \\
    \x_2 - \left( \procAIt{i}_1(\xHatIt{i}_1) \x_1 + \procB_{1}(\xHatIt{i}_1) \right ) \\
    %\x_3 - \left( \procAIt{i}_{2}(\xHatIt{i}_2) \x_2 + \procB_{2}(\xHatIt{i}_2) \right ) \\
    \vdots \\
    \x_{\tsFin} - \left( \procAIt{i}_{\tsFin-1}(\xHatIt{i}_{\tsFin-1}) \x_{\tsFin-1} + \procB_{\tsFin-1}(\xHatIt{i}_{\tsFin-1}) \right ) \\
    \meas_1 - \left( \measHIt{i}_{1}(\xHatIt{i}_1) \x_1 + \measC_{1}(\xHatIt{i}_1) \right) \\
    \vdots \\
    \meas_{\tsFin} - \left( \measHIt{i}_{\tsFin}(\xHatIt{i}_{\tsFin}) \x_{\tsFin} + \measC_{\tsFin}(\xHatIt{i}_{\tsFin}) \right) \\
    \end{pmatrix}
\end{align*}
and $\procAIt{i}_{\ts}(\x_{\ts}), \measHIt{i}_{\ts}(\x_{\ts})$ are the \slr Jacobians of $\procFn_{\ts}, \measFn_{\ts}$ respectively,
see the appendix for the derivation details.

The approximate \gn objective becomes
\begin{align}
  \label{eq:ipls:approx_objective}
  \gnIplsObjLin{i}(\xSeq)
  % = \frac{1}{2} \twoNorm{ \gnRLin(\xSeq) }^2
  &= \frac{1}{2}\left(\augObsIplsApprox(\xSeq)\right)^{\top} \augQIplsInv \augObsIplsApprox(\xSeq) \left(\augObsIplsApprox(\xSeq)\right).
\end{align}
%We defined $\gnIplsObj{i}(\xSeq)$ in terms of the current estimates $\xHatItSeq{i}, \PHatItSeq{i}$.
%The next \gn iterate $\xHatItSeq{i}$ is the estimate that exactly minimises $\gnIplsObjLin{i}(\xSeq)$.
To perform one iteration of the \ipls, we instead first linearise the state-space model in \cref{eq:def:state_space_model} using \slr with respect to $\normal(\x_{\ts}; \xHatIt{i}_{\ts}, \PHatIt{i}_{\ts})$.
The resulting approximate state-space model is on the form \cref{eq:def:lin:state_space_model} with linearisation parameters $\affAppParam{i}$ selected using \cref{eq:slr:jacobian,eq:slr:offset,eq:slr:cov}.
The next iterate $\xHatItSeq{i+1}$ is computed as the closed-form output of \rts smoothing.

Examining $\gnIplsObjLin{i}(\xSeq)$, we note that it is the negative log-posterior of the \slr linearised state-space model (up to a constant).
The next \gn iterate will be the closed-form solution to this minimisation problem.
Since the two methods, \gn and \ipls, compute in a single iteration the exact solution to the same optimisation problem, their output $\xHatItSeq{i+1}$ must be the same.
\end{proof}
\subsection{Levenberg--Marquardt regularisation}
\label{sec:our_method:lm}
\noindent The now established connection between the \ieks and \ipls and \gn optimisation makes the \lm method a promising alternative for regularisation.
\Cref{thm:lm_rts_smoothing} shows how \lm-regularisation can be achieved through smoothing of a slightly modified state-space model.
The result was shown for the \lmIeks in \cite{lm_ieks} and is here generalised to include the \lmIpls.
Similar interpretations of regularisation as extra measurements are discussed in \cite{lm_ieks_2,lm_observation},
but only for the \ieks.
\begin{prop}
  \label[prop]{thm:lm_rts_smoothing}
  Iterated smoothing with \ieks or \ipls (under the conditions in \cref{thm:gn_ipls}) for a state-space model as in \cref{eq:def:state_space_model}, extended with the measurement
  \begin{equation}
    \label{eq:lm:extended_problem}
    \xHatIt{i}_{\ts} = \x_{\ts} + \lmErr_{\ts},\qquad \lmErr_{\ts} \sim \normal(0, (\lmRegParamIt{i})^{-1} \regMat^{(i)}_{\ts})
  \end{equation}
  is a \gn method with the \lm-regularisation defined in \cref{eq:lm:cost_fn}, if $\regMat^{(i)}$ is a sequence of block-diagonal regularisation matrices:
  $\regMat^{(i)} = \diag \left(\regMat_1^{(i)}, \dots, \regMat_{\tsFin}^{(i)} \right)$, $\regMat^{(i)}_{\ts} \in \reals^{\dimState \times \dimState}, \forall \ts = 1, \dots, \tsFin$.
\end{prop}

\begin{proof}
The proof follows the structure of the earlier proofs and we can use the results of \cref{thm:gn_ieks,thm:gn_ipls} to simplify it.
First, we construct $\gnRLM{i}(\xSeq)$ such that $\lmObj{i}(\xSeq) = \frac{1}{2} \twoNorm{\gnRLM{i}(\xSeq)}^2$ and
show that the linearisation of $\gnRLM{i}(\xSeq)$ results in an approximate objective which is $\gnObjLin(\xSeq)$ plus an extra term.
Second, we introduce the measurement in \cref{eq:lm:extended_problem} into the state-space model \cref{eq:def:state_space_model} and linearise.
Third, we confirm that the \lm optimisation and iterative smoothers solve the same minimisation problem at each iteration.
The full proof is given in the appendix.
\end{proof}
In other words, we achieve an \lm-regularised version of the smoother by imposing this additional modelling assumption.
% From the perspective of the smoother
% TODO: Reflection: still shares the expected value of $\xSeq$.

%\begin{subequations}
%\begin{align}
%  \label{eq:def:ipls:lm:gnr}
%    \gnRLM(\xSeq) &=
%  \begin{pmatrix}
%    \gnR(\xSeq) \\
%    \lmRegSqrtT (\xSeq - \xHatItSeq{i})
%  \end{pmatrix} \\
%    \gnRLM(\xSeq) &: \reals^{\tsFin \dimState} \to \reals^{N + \tsFin \dimState}.
%\end{align}
%\end{subequations}

  % &=
  % \begin{pmatrix}
  %   J_{\gnR}(\xSeq) \\
  %   \lmRegSqrtT \frac{\d}{\d \xSeq} (\xSeq - \xHatItSeq{i})
  % \end{pmatrix}
  % \nonumber \\

%\begin{subequations}
%\begin{align}
%    \gnRLM'(\xSeq) &= \nonumber \\
%    &= \begin{pmatrix}
%    \gnR'(\xSeq) \\
%    \lmRegSqrtT \frac{\d}{\d \xSeq} (\xSeq - \xHatItSeq{i})
%  \end{pmatrix}
%    = \begin{pmatrix}
%    \gnR'(\xSeq) \\
%    \lmRegSqrtT
%  \end{pmatrix}
%  \\
%  \gnRLM'(\xSeq) &: \reals^{\tsFin \dimState} \to \reals^{N + \tsFin \dimState \times \tsFin \dimState}.
%\end{align}
%\end{subequations}

% the last term of \cref{eq:pls:lm:cost_fn:lin} can be written as
%
% \begin{align}
%   &\lmRegParamIt{i} (\xSeq - \xHatItSeq{i})^{\top} \left[ \regMat^{(i)}\right]^{-1} (\xSeq - \xHatItSeq{i})
%   \nonumber \\
%   =& \sum_{k=1}^{\tsFin} (\xSeq - \xHatIt{i})^{\top} \left[ (\lmRegParamIt{i})^{-1} \regMat_{\ts}^{(i)} \right]^{-1} (\xSeq - \xHatIt{i}).
% \label{eq:cost_fn:lm:block_diag_term}
% \end{align}
%Comparing this to the original cost function in \cref{eq:pls:cost_fn} we note that

\subsection{\lmIpls Algorithm}
\label{sec:our_method:lm_alg}
\noindent Here, we present a full description of the \lm-regularised iterative smoothers.
% Du kanske kan förklara de huvudsakliga stegen:
% 1. Linjärisera modellerna med hjälp av tidigare skattningar.
% 2. RTS-glättning med regularisering enligt (??).
% 3. Acceptera den nya skattningen om kostnaden minskar.
% ? Därefter kan du nämna några av de huvudsakliga skillnaderna.
The method is an iterative smoother which uses estimates from the previous iteration to linearise the motion and measurement models, giving an approximate affine state-space model as in \cref{eq:def:lin:state_space_model}.
A new estimate is then proposed through \rts smoothing of the affine state-space model, with an extra measurement of the state, corresponding to \lm regularisation, see \cref{thm:lm_rts_smoothing}.
The new estimate is accepted if it results in a lower value for an associated cost function, see \cref{eq:gn_ieks:cost_fn,eq:ipls:gn:cost_fn}.

A single iteration step for the linearised models is described in \cref{alg:lm_iter}.
\begin{algorithm}[t]
  \caption{\lm smoother single inner loop iteration}
  \label{alg:lm_iter}
  \small
\begin{algorithmic}[1]
  \Require
  The current estimated means $\xHatItSeq{i}$,%, $\PHatItSeq{i}$,
  priors $\priorMean$ and $\priorCov$,
  measurements $\measSeq$,
  affine approximations of motion and meas. models $\affAppParam{i}$,
  % $\procA_{1:\tsFin}, \procB_{1:\tsFin}, \procLinCov_{1:\tsFin}$,
  % $\measH_{1:\tsFin}, \measC_{1:\tsFin}, \measLinCov_{1:\tsFin}$
  regularisation parameter $\lambda$, regularisation matrices $\regMat_{1:\tsFin}$,
  and implicitly a cost function $\gnObj$.

  \Ensure {
    New smoothed estimated means and covariances $\xHatSeqSmooth{s}, \PHatSeqSmooth{s}$,
    s.t. $\gnObj(\xHatSeqSmooth{s}) < \gnObj(\xHatItSeq{i})$, and updated $\lambda$.
  }
  \Procedure{\lm-it.}{$\xHatItSeq{i}, \PHatItSeq{i}, \measSeq, \lambda$}
    \Repeat \, // Until \lm cost reduction
      %\State // NB: lin. is done w.r.t. the current means, but the old cov's.
      \For{$\ts=1, \ldots, \tsFin$}
        \State $\xPred{\ts},\ \PPred{\ts} \gets \textsc{\kf prediction}$
        \State // Standard update based on $\meas_{\ts}$.
        \State $\xUpd{\ts},\ \PUpd{\ts} \gets \textsc{\kf update}$

        \State // Extra \lm-regularisation update step:
        \If{$\lambda > 0$}
          \State  $\innCov_{\ts} \gets \PUpd{\ts} + \lambda^{-1} \regMat_{\ts}$
          \State $\K_{\ts} \gets \PUpd{\ts} \innCov_{\ts}^{-1}$
          \State $\xUpd{\ts} \gets \xUpd{\ts} + \K_{\ts} [\xHatIt{i}_{\ts} - \xUpd{\ts} ]$
          \State $\PUpd{\ts} \gets \PUpd{\ts} - \K_{\ts} \innCov_{\ts} [\K_{\ts}]^{\top}$
        \EndIf
      \EndFor
      \State $\xHatSeqSmooth{s},\ \PHatSeqSmooth{s} \gets \textsc{\rts smoothing}$
      \If{$\gnObj(\xHatSeqSmooth{s}) < \gnObj(\xHatItSeq{i})$}
          \State // Decrease regularisation and accept the iterate.
          \State $\lambda \gets \lambda / \nu$
          % \State Set $i \gets i+1$
          % \State Break % TODO: Proper formulation.
          \Else
          \State // Increase regularisation and reject the iterate.
          \State $\lambda \gets \nu\lambda$
      \EndIf
    \Until{the iterate is accepted}
    \State \Return $\xHatSeqSmooth{s},\ \PHatSeqSmooth{s}, \lambda$
  \EndProcedure
\end{algorithmic}
\end{algorithm}

The full algorithm is simply the iteration of steps taken in \cref{alg:lm_iter}, using the accepted estimates in the previous step as the estimates used for linearisation.
The complete procedure is described in \cref{alg:lm_smoother}.
\begin{algorithm}[t]
    \caption{\lmIeks and \lmIpls algorithms}
    \label{alg:lm_smoother}
    \small
    \begin{algorithmic}[1]
      \Require
      Initial moments $\xHatItSeq{0}$, $\PHatItSeq{0}$,
      priors $\priorMean$ and $\priorCov$,
      measurements $\meas_{1:\tsFin}$,
      increase/decrease parameter $\nu> 1$,
      initial regularisation parameter $\lmRegParamInit$,
      smoother type $t \in \{\text{\lmIeks, \lmIpls}\}$
      and implicitly
      a cost function $\lmObj{i}$,
      motion and measurement models and parameters:
      $\procFn_{1:\tsFin}$, $\procCov_{1:\tsFin}$,
      $\measFn_{1:\tsFin}$, $\measCov_{1:\tsFin}$
      and regularisation matrices $\regMat_{1:\tsFin}$,
        % TODO: What to call the traj? Compare with lm-ieks: MAP
      \Ensure The smoothed trajectory $\xHatSeqSmooth{*},\ \PHatSeqSmooth{*}$.
        \Procedure{\lmIpls/\lmIeks}{}\
        \State Set $i \gets 0$ and $\lmRegParamIt{i} \gets \lmRegParamInit$
        \Repeat
          \If{$t == $ \lmIeks}
            \State Set $\procLinCov_{1:\tsFin}, \measLinCov_{1:\tsFin}$ to 0, see \cref{eq:ext:cov}
          \ElsIf{$t == $ \lmIpls}
            \State Calc. $\procLinCov_{1:\tsFin}, \measLinCov_{1:\tsFin}$ using $\xHatItSeq{i}, \PHatItSeq{i}$ in \cref{eq:slr:cov}
          \EndIf
          \Repeat
            \If{$t == $ \lmIeks}
              \State // Affine approx. using \cref{eq:ext:jacobian,eq:ext:offset}.
              \State Calc. $\procA_{1:\tsFin}, \procB_{1:\tsFin}, \measH_{1:\tsFin}, \measC_{1:\tsFin}$ using $\xHatItSeq{i}$
            \ElsIf{$t == $ \lmIpls}
              \State // Affine approx. using \cref{eq:slr:jacobian,eq:slr:offset}.
              \State Calc. $\procA_{1:\tsFin}, \procB_{1:\tsFin}, \measH_{1:\tsFin}, \measC_{1:\tsFin}$ using $\xHatItSeq{i}, \PHatItSeq{i}$
            \EndIf
              %   \State Estimate $\procA_{1:\tsFin}, \procB_{1:\tsFin}, \procLinCov_{1:\tsFin}$ using $\xHatItSeq{i}, \PHatItSeq{i}$
              %   \State Estimate $\measH_{1:\tsFin}, \measC_{1:\tsFin}, \measLinCov_{1:\tsFin}$ using $\xHatItSeq{i}, \PHatItSeq{i}$
              % \Else
              %   \State Estimate $\procA_{1:\tsFin}, \procB_{1:\tsFin}, \procLinCov_{1:\tsFin}$ using $\xHatItSeq{i}, \PHatItSeq{i}$
              %   \State Estimate $\measH_{1:\tsFin}, \measC_{1:\tsFin}, \measLinCov_{1:\tsFin}$ using $\xHatItSeq{i}, \PHatItSeq{i}$
              % \EndIf
            \State $\affAppParam{i} \gets
            \procA_{1:\tsFin}, \procB_{1:\tsFin}, \procLinCov_{1:\tsFin},
            \measH_{1:\tsFin}, \measC_{1:\tsFin}, \measLinCov_{1:\tsFin}$

            \State $\xHatSeqSmooth{s}, \PHatSeqSmooth{s}, \lmRegParamIt{i} \gets \textsc{\lm-it.}(
            \xHatItSeq{i}, \measSeq, \lmRegParamIt{i}, \affAppParam{i})$
            \State //The $\PHatItSeq{i}$ estimate is kept in the inner loop.
            \State $\xHatItSeq{i+1},\ \PHatItSeq{i+1}, \lmRegParamIt{i+1} \gets \xHatSeqSmooth{s}, \PHatItSeq{i}, \lmRegParamIt{i}$
            \State $i \gets i+1$
          \Until{inner loop termination condition met}
          \State // The covariance estimates are updated here.
          \State $\PHatItSeq{i} \gets \PHatSeqSmooth{s}$
        \Until{convergence}
        \State \Return $\xHatItSeq{i},\ \PHatItSeq{i}$
     \EndProcedure
    \end{algorithmic}
\end{algorithm}

For readability, we omit some model parameters in the algorithmic description, the origins of which should be clear from the context.

The differences between the variants of the \lm smoother stems from the different method of linearisation:
\slr defined in \cref{eq:slr:jacobian,eq:slr:offset,eq:slr:cov} for the \lmIpls and Taylor expansion in \cref{eq:ext:jacobian,eq:ext:offset,eq:ext:cov} for the \lmIeks.
Apart from the obvious difference in the computed linearisation, the \slr also requires some extra steps in the algorithm, which we detail below.

The \ipls's use of both the estimated means and covariances for the linearisation requires a sequence of cost functions (instead of a single cost function), see \cref{sec:our_method:ipls}.
The cost function is changed when the current estimated covariances are updated.
In practice, the algorithm controls this by adding an inner loop in \cref{alg:lm_smoother}.

The inner loop allows for an arbitrary number of \lm-iteration steps, where the estimated means are updated while the covariances are kept fixed.
After some provided termination condition is fulfilled, the covariances are updated, thereby moving on to a new cost function.
We have found that the simplest setting, to exit the inner loop after a single iteration, works well in practice but more elaborate conditions are possible,
such as requiring a sufficient decrease in $\lmObj{i}$ or a sufficiently large $\lmRegParamIt{i}$.
For the \lmIeks this inner loop has no effect since it uses the same cost function throughout the optimisation.

In \cref{alg:lm_smoother} the \lmIeks and \lmIpls differ only in the different methods of linearisation that are applied when estimating the affine approximations.

\section{\lsIpls}
\label{sec:our_method:line_search}
\noindent Another way to improve the \gn method is to introduce a line-search (LS) procedure to the algorithm \cite{num_opt}.
The LS version of the \ieks was described in \cite{lm_ieks} and here we extend it to the \ipls,
re-using the cost function in \cref{eq:ipls:gn:cost_fn}.
LS can be implemented by introducing a parameter $\alpha > 0$ to restrict the iterative update of the estimates.
That is, given $\xHatItSeq{i+1}$ and $\xHatItSeq{i}$,
we define $\searchDirSeq{i} = \xHatItSeq{i+1} - \xHatItSeq{i}$, and we obtain the LS update of the estimates:
\begin{subequations}
\begin{align}
  \label{eq:ls:update}
  \xHatItSeq{i+1}(\alpha) &= \xHatItSeq{i} + \alpha \searchDirSeq{i}, \\
  \label{eq:ls:step_length}
  \alpha &= \argMin{\alpha' \in [0,1]} \gnObj^{(i)}(\xHatItSeq{i} + \alpha' \searchDirSeq{i}).
\end{align}
\end{subequations}

\subsection{Armijo--Wolfe conditions for \lsIpls}
We can also use an inexact version of LS, where we seek a point
$\lsPoint = \xHatItSeq{i} + \alpha \searchDirSeq{i}$,
for which we only require a sufficient decrease in the cost function.
This is guaranteed by fulfilling the Armijo or Wolfe conditions \cite{num_opt}
\begin{subequations}
\begin{align}
  \label{eq:def:armijo_cond}
  \costFn(\lsPoint) \leq \costFn(\xHatItSeq{i}) + c_1 \alpha (\searchDirSeq{i})^{\top} \grad \costFn(\xHatItSeq{i}), \\
  \label{eq:def:wolfe_cond}
  (\searchDirSeq{i})^{\top} \grad \costFn(\lsPoint) \geq c_2 (\searchDirSeq{i})^{\top} \grad \costFn(\xHatItSeq{i}),
\end{align}
\end{subequations}
where $0 < c_1 < c_2 < 1$.

\begin{lemma}
  \label[lemma]{thm:ls:armijo_wolfe:ipls}
  An efficient computation of the directional derivatives for the \ipls cost function in \cref{eq:ipls:gn:cost_fn} to evaluate the Armijo--Wolfe conditions in \cref{eq:def:armijo_cond,eq:def:wolfe_cond}
  is given by
  \begin{align}
    &(\searchDirSeq{i})^{\top} \grad \costFn(\xHatItSeq{i})
    = (\searchDir{i}_{1})^{\top} \priorCov^{-1}(\xHatIt{i}_1 - \priorMean) \nonumber \\
    &\hspace{1cm}+ \sum_{\ts=1}^{K-1} \left( \searchDir{i}_{\ts+1} - \procA_{\ts}(\xHatIt{i}_\ts) \searchDir{i}_{\ts} \right)^{\top}
    \times (\procCov_{\ts} + \procLinCovIt{i}_{\ts})^{-1}(\xHatIt{i}_{\ts+1} - \stateBar_\ts(\xHatIt{i}_\ts)) \nonumber \\
    &\hspace{1cm}- \sum_{\ts=1}^{K} \left( \measH_{\ts}(\xHatIt{i}_\ts) \searchDir{i}_{\ts} \right)^{\top} (\measCov_{\ts} + \measLinCovIt{i}_{\ts})^{-1}(\xHatIt{i}_{\ts+1} - \measBar_\ts(\xHatIt{i}_\ts)). \nonumber \\
  \end{align}
\end{lemma}
\begin{proof}
To evaluate the Armijo--Wolfe conditions for a certain step length $\alpha$,
we need to compute the gradient of the cost function in \cref{eq:ipls:gn:cost_fn},
which comes directly from \cref{thm:gn_ipls:cost_fn:grad}.
The derivations of the directional derivatives are in the appendix.
\end{proof}

By selecting a suitable estimate on a line between the previous estimate and the new one proposed by the smoothing iteration,
the methods improve since the size of the update step is allowed to decrease for iteration updates that risk diverging.
Potentially, this could also lead to faster convergence, albeit with the extra computational demand incurred by finding a suitable $\alpha$.

\subsection{\lsIpls Algorithm}
\label{sec:our_method:ls_alg}
\noindent Here, we present a full description of the line-search iterative smoothers.
The basis of the line-search algorithms \lsIeks and \lsIpls is to optimise the line that connects the previous and proposed estimates, see \cref{eq:ls:update,eq:ls:step_length}.
However, when we use the \ieks/\ipls implementation of the \gn method, there is no increment computed in the same sense as in the classical formulation of the \gn method.
Fortunately, given the previous iterate $\xHatItSeq{i}$ and the proposed iterate $\xHatSeqSmooth{s}$, we can compute the corresponding increment via $\Delta \xHatItSeq{i+1} = \xHatItSeq{i+1} - \xHatSeqSmooth{s}$.
The proposed iterate $\xHatSeqSmooth{s}$ is computed with a standard step of the \gn optimisation, which is equivalent to running \cref{alg:lm_iter} with $\lmRegParamIt{i} = 0$.
An inexact line-search algorithm is described in \cref{alg:line_search}.

\begin{algorithm}[t]
    \caption{Inexact \lsIeks and \lsIpls algorithms}
    \label{alg:line_search}
    \small
    \begin{algorithmic}[1]
        \Require
        Initial moments $\xHatItSeq{0}$, $\PHatItSeq{0}$,
        priors $\priorMean$ and $\priorCov$,
        measurements $\meas_{1:\tsFin}$,
        smoother type $t \in \{\text{\lmIeks, \lmIpls}\}$
        and implicitly
        a cost function $\costFn^{(i)}$,
        motion and measurement models and parameters:
        $\procFn_{1:\tsFin}$, $\procCov_{1:\tsFin}$,
        $\measFn_{1:\tsFin}$, $\measCov_{1:\tsFin}$.
      \Ensure The smoothed trajectory $\xHatSeqSmooth{*},\ \PHatSeqSmooth{*}$.
      \Procedure{\lsIeks/\lsIpls}{}\
        \State Set $i \gets 0$
        \Repeat
          \If{$t == $ \lsIeks}
            \State Set $\procLinCov_{1:\tsFin}, \measLinCov_{1:\tsFin}$ to 0, see \cref{eq:ext:cov}
          \ElsIf{$t == $ \lsIpls}
            \State Est. $\procLinCov_{1:\tsFin}, \measLinCov_{1:\tsFin}$ using $\xHatItSeq{i}, \PHatItSeq{i}$ in \cref{eq:slr:cov}
          \EndIf
          \State // Initial covariances in the inner loop.
          \State $\PHatSeqSmooth{s'} \gets \PHatItSeq{i}$
          \Repeat
            \If{$t == $ \lsIeks}
              \State // Affine approx. using \cref{eq:ext:jacobian,eq:ext:offset}.
              \State Est. $\procA_{1:\tsFin}, \procB_{1:\tsFin}, \measH_{1:\tsFin}, \measC_{1:\tsFin}$ using $\xHatItSeq{i}$
            \ElsIf{$t == $ \lsIpls}
              \State // Affine approx. using \cref{eq:slr:jacobian,eq:slr:offset}.
              \State Est. $\procA_{1:\tsFin}, \procB_{1:\tsFin}, \measH_{1:\tsFin}, \measC_{1:\tsFin}$ using $\xHatItSeq{i}, \PHatItSeq{i}$
            \EndIf
            \State $\affAppParam{i} \gets
            \procA_{1:\tsFin}, \procB_{1:\tsFin}, \procLinCov_{1:\tsFin},
            \measH_{1:\tsFin}, \measC_{1:\tsFin}, \measLinCov_{1:\tsFin}$
            \State // \lm-iter with $\lmRegParamIt{i} = 0$ corresp. to a \gn-step.
            \State $\xHatSeqSmooth{s},\ \PHatSeqSmooth{s} \gets \textsc{\lm-iter}(\xHatItSeq{i}, \PHatItSeq{i}, \measSeq, 0, \affAppParam{i})$
            \State $\searchDir{i}, \Delta \PHatSeqSmooth{s} \gets \xHatSeqSmooth{s} - \xHatItSeq{i}, \PHatSeqSmooth{s} - \PHatSeqSmooth{s'}$
            \State Select $\alpha$ satisfying the Armijo--Wolfe cond.
            \State $\xHatItSeq{i+1} \gets \xHatItSeq{i} + \alpha \searchDir{i}$
            \State // Update covariances
            \State $\PHatSeqSmooth{s'} \gets \PHatSeqSmooth{s'} + \alpha \Delta \PHatSeqSmooth{s}$
            \State $\PHatItSeq{i+1} \gets \PHatItSeq{i}$
            \State $i \gets i+1$
          \Until{inner loop termination condition met}
          \State $\PHatItSeq{i} \gets \PHatSeqSmooth{s'}$
        \Until{Converged}
        \State \Return $\xHatItSeq{i},\ \PHatItSeq{i}$
      \EndProcedure
    \end{algorithmic}
\end{algorithm}

Similar to the \lm-regularised methods, there are some differences between the \ieks and \ipls -based versions of the line-search algorithm.
The \lsIeks and \lsIpls use their respective version of the smoother iteration in \cref{alg:lm_iter}, detailed in \cref{sec:our_method:lm_alg}.
For the \lsIeks, only the estimated means $\xHatItSeq{i}$ are used in the linearisation and the estimated covariances are disregarded.
For the same reason, the inner loop which enables repeated optimisation of the same cost function, that is with covariances kept fixed, has no effect for the \lsIeks and it exits the loop after a single iteration.
%
%Finding the optimal step length $\alpha$ can be done in several ways.
%When solving the optimisation analytically is intractable we can always perform an approximative exact line-search with a grid-search, comparing a fixed number of candidates for the $\alpha$ with the lowest cost.
%Alternatively, we can use inexact line-search and only require a sufficient decrease of the cost function by using the Armijo or Wolfe conditions \cite{num_opt}.

% \input{content/algs/algorithms.tex}
\section{Simulation results}
\label{sec:exp}
\noindent We demonstrate the benefits of the \lm regularised and line-search smoothers in highly nonlinear smoothing problems.
In the simulations, we do a single iteration of the inner loop, meaning that the cost function is updated at every iteration.
For the \lmIpls, we use $\lmRegParamIt{0} = 0.01, \nu=10$, $\regMat^{(i)} = \diag(\regMat_1^{(i)}, \dots, \regMat_{\tsFin}^{(i)})$ with $\regMat^{(i)}_{\ts} = \mat{I},\ \ts = 1, \dots, \tsFin$.
For the \lsIpls, we perform inexact line-search with Armijo--Wolfe conditions, with $c_1 = 0.1, c_2 = 0.9$.
The experiments are implemented in Python and the code is publicly available\footnote{\href{https://www.github.com/jackonelli/post_lin_smooth}{\texttt{github.com/jackonelli/post\_lin\_smooth}}}.
\subsection{Coordinated turn (CT) model with bearings only measurements}
\label{sec:exp:ct_const_sens}
\noindent We extend the experiment from \cite{lm_ieks} to include the \ipls methods along with the \ieks method of the original paper.
The experiment setup is a sequence of true states $\xSeq$ of length $\tsFin=500$, simulated from a coordinated turn model and measurements of bearings only.
See the appendix for a full specification of the CT model.
The bearings measurements come from two sensors placed at $(-1.5, 0.5)^{\top}$ and $(1, 1)^{\top}$ respectively, with relatively high noise with variance $\sigma^2 = 1/2^2 \ \rad^2$.

A single realisation is shown in \cref{fig:ct:realisation}, along with examples of estimated trajectories from the different models.
\begin{figure}
  \centering
  % This file was created by tikzplotlib v0.9.7.
\begin{tikzpicture}[scale=0.8]

\begin{axis}[
legend cell align={left},
legend columns=1,
legend style={fill opacity=0.8, , at={(1.01,1)}, anchor=north west, draw opacity=1, text opacity=1, draw=white!80!black},
tick align=outside,
tick pos=left,
x grid style={white!69.0196078431373!black},
xlabel={$x(1)$},
xmin=-1.6, xmax=1.1,
xtick style={color=black},
y grid style={white!69.0196078431373!black},
ylabel={$x(2)$},
ymin=-1, ymax=1.1,
ytick style={color=black},
cycle list name=colorAndLine
]
\addplot+[semithick]
table [x, y]{fig/ieks_realisation/ieks.data};
\addlegendentry{\ieks};
\addplot+[semithick]
table [x, y]{fig/ieks_realisation/lm-ieks.data};
\addlegendentry{\lmIeks};
\addplot+[semithick]
table [x, y]{fig/ieks_realisation/ls-ieks.data};
\addlegendentry{\lsIeks};
\addplot+[semithick]
table [x, y]{fig/ieks_realisation/ipls.data};
\addlegendentry{\ipls}
\addplot+[semithick]
table [x, y]{fig/ieks_realisation/lm-ipls.data};
\addlegendentry{\lmIpls}
\addplot+[semithick]
table [x, y]{fig/ieks_realisation/ls-ipls.data};
\addlegendentry{\lsIpls}
\addplot [semithick, black, mark=*, mark size=0.25, mark options={solid}, only marks]
table [x, y]{fig/ieks_realisation/true_traj.data};
\addlegendentry{$\xSeq$};
\addplot [semithick, black, mark=triangle*, mark size=2, mark options={solid}, only marks, forget plot]
table [x, y]{
x y
-1.5 0.5
1 1
};
\end{axis}
\end{tikzpicture}
  \caption{\label{fig:ct:realisation}CT experiment with bearings only measurements. The two sensors are placed at $(-1.5, 0.5)^{\top}$ and $(1, 1)^{\top}$.}
\end{figure}
For this particular realisation, all algorithms perform similarly, largely following the true trajectory.

To see a discrepancy between the models we repeat the experiment for \numMcRuns independent trials,
where the true trajectory and measurements are resampled at every trial.
For each method, we measure the \textit{root mean square error (\rmse)} and \textit{normalised estimation error squared (\nees)} \cite{app:nav_and_track}, across 10 iterations.
The results are displayed in \cref{fig:ct:bearings_only:metrics}.
\begin{figure}
  \centering
  \begin{tikzpicture}
\pgfplotsset{/pgfplots/error bars/error bar style={very thick}}

\begin{groupplot}[
    group style={
        group size=2 by 1,
        horizontal sep=1.5cm,
        %vertical sep = 2cm,
    },
    xlabel={Iteration},
    height=5cm,
    width= 6.5cm,
    xmin=0.5, xmax=10.5,
    ylabel near ticks,
]
\nextgroupplot[
legend cell align={left},
legend columns=6,
legend style={fill opacity=0.8, draw opacity=1, text opacity=1, at={(1,1.05)}, anchor=south, draw=white!80!black},
tick align=outside,
tick pos=left,
x grid style={white!69.0196078431373!black},
xtick style={color=black},
ymode=log,
ymin=0.3, ymax=10,
ylabel={\rmse},
ytick style={color=black},
y grid style={white!69.0196078431373!black},
cycle list name=onlyColor,
ylabel shift=-5pt,
]
\addplot+[error bars/.cd, y dir=both, y explicit]
table[x=x, y=y, y error=err] {fig/ct_bearings_only_metrics/rmse/ieks.data};
\addlegendentry{\ieks}
\addplot+[error bars/.cd, y dir=both, y explicit]
table[x=x, y=y, y error=err] {fig/ct_bearings_only_metrics/rmse/lm-ieks.data};
\addlegendentry{\lmIeks}
\addplot+[error bars/.cd, y dir=both, y explicit]
table[x=x, y=y, y error=err] {fig/ct_bearings_only_metrics/rmse/ls-ieks.data};
\addlegendentry{\lsIeks}
\addplot+[error bars/.cd, y dir=both, y explicit]
table[x=x, y=y, y error=err] {fig/ct_bearings_only_metrics/rmse/ipls.data};
\addlegendentry{\ipls}
\addplot+[error bars/.cd, y dir=both, y explicit]
table[x=x, y=y, y error=err] {fig/ct_bearings_only_metrics/rmse/lm-ipls.data};
\addlegendentry{\lmIpls}
\addplot+[error bars/.cd, y dir=both, y explicit]
table[x=x, y=y, y error=err] {fig/ct_bearings_only_metrics/rmse/ls-ipls.data};
\addlegendentry{\lsIpls}
\nextgroupplot[
ymode=log,
% legend cell align={left},
% legend columns=3,
% legend style={fill opacity=0.8, draw opacity=1, text opacity=1, at={(0.5,1)}, anchor=north, draw=white!80!black},
tick align=outside,
tick pos=left,
ylabel={\nees},
x grid style={white!69.0196078431373!black},
xtick style={color=black},
y grid style={white!69.0196078431373!black},
ymin=2, ymax=1200,
ytick style={color=black},
cycle list name=onlyColor,
ylabel shift=-5pt,
]
\addplot+[error bars/.cd, y dir=both, y explicit]
table[x=x, y=y, y error=err] {fig/ct_bearings_only_metrics/nees/ieks.data};
%\addlegendentry{\ieks}
\addplot+[error bars/.cd, y dir=both, y explicit]
table[x=x, y=y, y error=err] {fig/ct_bearings_only_metrics/nees/lm-ieks.data};
%\addlegendentry{\lmIeks}
\addplot+[error bars/.cd, y dir=both, y explicit]
table[x=x, y=y, y error=err] {fig/ct_bearings_only_metrics/nees/ls-ieks.data};
%\addlegendentry{\lsIeks}
\addplot+[error bars/.cd, y dir=both, y explicit]
table[x=x, y=y, y error=err] {fig/ct_bearings_only_metrics/nees/ipls.data};
%\addlegendentry{\ipls}
\addplot+[error bars/.cd, y dir=both, y explicit]
table[x=x, y=y, y error=err] {fig/ct_bearings_only_metrics/nees/lm-ipls.data};
%\addlegendentry{\lmIpls}
\addplot+[error bars/.cd, y dir=both, y explicit]
table[x=x, y=y, y error=err] {fig/ct_bearings_only_metrics/nees/ls-ipls.data};
%\addlegendentry{\lsIpls}
\end{groupplot}
\end{tikzpicture}
  \caption{
    \label{fig:ct:bearings_only:metrics}Simulated CT model with bearings only measurements, see \cref{fig:ct:realisation} for setup.
  Curves show averaged \rmse and \nees across iterations averaged over \numMcRuns trials.
  %Both the true trajectory and measurements are resampled at every trial.
  Error bars correspond to the standard error, i.e. the estimated standard deviation scaled by $1 / \sqrt{\numMcRuns}$.
  }
\end{figure}

From these results, it is clear that the \ipls methods consistently perform better and require fewer iterations to reach a good trajectory.
The large spread in the metrics for the \ieks methods comes from the fact that they diverge for a significant number of realisations.
Among the \ipls methods there is little variation, with \lmIpls and \lsIpls possibly showing a slightly faster reduction in \rmse;
this problem has little need for regularisation beyond what the standard \ipls provides.

The results indicate that the \ipls cost function is a better optimisation objective, compared to the MAP objective of the \ieks, at least in terms of \rmse and \nees.
This advantage has the obvious caveat that a single iteration of the \ipls methods is more computationally expensive than its \ieks counterpart.
\subsection{CT model with time-dependent bearings only measurement model}
\label{sec:exp:ct_var_sens}
\noindent To examine the impact of regularisation, we analyse a special case of the coordinated turn experiment in \cref{sec:exp:ct_const_sens}.

The experiment setup is almost identical to the original experiment above, with a true trajectory of length $\tsFin=500$, simulated from a coordinated turn model and a bearings-only measurement model.
The bearings measurements come from two sensors placed at $(-1.5, 0.5)^{\top}$ and $(1, 1)^{\top}$ respectively, both with relatively high noise with variance $\sigma^2 = 1/2^2 \ \rad^2$.
To highlight the benefit of regularisation we modify the sensor arrangement to create some challenging non-linearities:
For time steps $k = 50, 100, \dots, 500$, the measurement consists of a single reading from the sensor at $(1, 1)^{\top}$, but with a low noise with $\sigma^2 = 0.025^2 \ \rad^2$.

For a more stable simulation, we use fixed initial estimates for all iterated smoothers
%\begin{align*}
%  \xHatItSeq{0} &= (\xHat^{(0)}_{1}, \xHat^{(0)}_{2}, \dots, \xHat^{(0)}_{\tsFin}),\ \xHat^{(0)}_{\ts} = \vek{0},\, \ts = 1, \dots, K \\
%  \PHatItSeq{0} &= (\PHat^{(0)}_{1}, \PHat^{(0)}_{2}, \dots, \PHat^{(0)}_{\tsFin}),\ \PHat^{(0)}_{\ts} = \priorCov,\, \ts = 1, \dots, K.
%\end{align*}
%
\begin{equation*}
  \xHat^{(0)}_{\ts} = \vek{0},\, \PHat^{(0)}_{\ts} = \priorCov,\, \ts = 1, \dots, K.
\end{equation*}
A single realisation is shown in \cref{fig:ct_bearings_only_varying_sensors:setup}, along with examples of estimated trajectories from the different models.
\begin{figure}
  \centering
  \begin{tikzpicture}[scale=0.8]
\begin{axis}[
legend cell align={left},
legend columns=1,
legend style={fill opacity=0.8, , at={(1.01,1)}, anchor=north west, draw opacity=1, text opacity=1, draw=white!80!black},
tick align=outside,
tick pos=left,
title={Estimates},
x grid style={white!69.0196078431373!black},
xlabel={$x(1)$},
xmin=-0.2, xmax=0.9,
xtick style={color=black},
y grid style={white!69.0196078431373!black},
ylabel={$x(2)$},
ymin=-1.2, ymax=0.75,
ytick style={color=black},
cycle list name=colorAndLine
]
\addplot+[semithick]
table [x, y]{fig/ct_bearings_only_varying_sensors/setup/ieks.data};
\addlegendentry{\ieks};
\addplot+[semithick]
table [x, y]{fig/ct_bearings_only_varying_sensors/setup/lm-ieks.data};
\addlegendentry{\lmIeks};
\addplot+[semithick]
table [x, y]{fig/ct_bearings_only_varying_sensors/setup/ls-ieks.data};
\addlegendentry{\lsIeks};
\addplot+[semithick]
table [x, y]{fig/ct_bearings_only_varying_sensors/setup/ipls.data};
\addlegendentry{\ipls}
\addplot+[semithick]
table [x, y]{fig/ct_bearings_only_varying_sensors/setup/lm-ipls.data};
\addlegendentry{\lmIpls}
\addplot+[semithick]
table [x, y]{fig/ct_bearings_only_varying_sensors/setup/ls-ipls.data};
\addlegendentry{\lsIpls}
\addplot [semithick, black, mark=*, mark size=0.25, mark options={solid}, only marks]
table [x, y]{fig/ct_bearings_only_varying_sensors/setup/true_traj.data};
\addlegendentry{$\xSeq$};
\end{axis}
\end{tikzpicture}
  \caption{
    \label{fig:ct_bearings_only_varying_sensors:setup}Single realisation of the CT experiment with varying bearings-only measurements.
    At $k = 50, 100, \dots, 500$ only a single low noise measurement is observed.
    For this particular realisation, it is only the \lm-regularised smoothers, \lmIeks and \lmIpls, which accurately estimate the general shape of the true trajectory.
}
\end{figure}
The importance of regularisation is shown in \cref{fig:ct_bearings_only_varying_sensors:metrics},
where \rmse and \nees metrics, averaged over \numMcRuns independent realisations, are displayed.
\begin{figure}
  \centering
  \begin{tikzpicture}[scale=1]
\pgfplotsset{/pgfplots/error bars/error bar style={very thick}}

\begin{groupplot}[
    group style={
        group size=2 by 1,
        horizontal sep=1.65cm,
        %vertical sep = 2cm,
    },
    xlabel={Iteration},
    height=5cm,
    width= 6.5cm,
    xmin=0.5, xmax=10.5,
    ylabel near ticks,
]
\nextgroupplot[
ymode=log,
legend cell align={left},
legend columns=6,
legend style={fill opacity=0.8, draw opacity=1, text opacity=1, at={(1,1.05)}, anchor=south, draw=white!80!black},
tick align=outside,
tick pos=left,
ylabel={\rmse},
x grid style={white!69.0196078431373!black},
xtick style={color=black},
y grid style={white!69.0196078431373!black},
ymin=0.2, ymax=1000,
ytick style={color=black},
cycle list name=onlyColor
]
\addplot+[error bars/.cd, y dir=both, y explicit]
table[x=x, y=y, y error=err] {fig/ct_bearings_only_varying_sensors/metrics/rmse/ieks.data};
\addlegendentry{\ieks}
\addplot+[error bars/.cd, y dir=both, y explicit]
table[x=x, y=y, y error=err] {fig/ct_bearings_only_varying_sensors/metrics/rmse/lm-ieks.data};
\addlegendentry{\lmIeks}
\addplot+[error bars/.cd, y dir=both, y explicit]
table[x=x, y=y, y error=err] {fig/ct_bearings_only_varying_sensors/metrics/rmse/ls-ieks.data};
\addlegendentry{\lsIeks}
\addplot+[error bars/.cd, y dir=both, y explicit]
table[x=x, y=y, y error=err] {fig/ct_bearings_only_varying_sensors/metrics/rmse/ipls.data};
\addlegendentry{\ipls}
\addplot+[error bars/.cd, y dir=both, y explicit]
table[x=x, y=y, y error=err] {fig/ct_bearings_only_varying_sensors/metrics/rmse/lm-ipls.data};
\addlegendentry{\lmIpls}
\addplot+[error bars/.cd, y dir=both, y explicit]
table[x=x, y=y, y error=err] {fig/ct_bearings_only_varying_sensors/metrics/rmse/ls-ipls.data};
\addlegendentry{\lsIpls}
\nextgroupplot[
ymode=log,
tick align=outside,
tick pos=left,
ylabel={\nees},
x grid style={white!69.0196078431373!black},
xtick style={color=black},
y grid style={white!69.0196078431373!black},
ymin=12, ymax=5e6,
ytick style={color=black},
cycle list name=onlyColor
]
\addplot+[error bars/.cd, y dir=both, y explicit]
table[x=x, y=y, y error=err] {fig/ct_bearings_only_varying_sensors/metrics/nees/ieks.data};
%\addlegendentry{\ieks}
\addplot+[error bars/.cd, y dir=both, y explicit]
table[x=x, y=y, y error=err] {fig/ct_bearings_only_varying_sensors/metrics/nees/lm-ieks.data};
%\addlegendentry{\lmIeks}
\addplot+[error bars/.cd, y dir=both, y explicit]
table[x=x, y=y, y error=err] {fig/ct_bearings_only_varying_sensors/metrics/nees/ls-ieks.data};
%\addlegendentry{\lsIeks}
\addplot+[error bars/.cd, y dir=both, y explicit]
table[x=x, y=y, y error=err] {fig/ct_bearings_only_varying_sensors/metrics/nees/ipls.data};
%\addlegendentry{\ipls}
\addplot+[error bars/.cd, y dir=both, y explicit]
table[x=x, y=y, y error=err] {fig/ct_bearings_only_varying_sensors/metrics/nees/lm-ipls.data};
%\addlegendentry{\lmIpls}
\addplot+[error bars/.cd, y dir=both, y explicit]
table[x=x, y=y, y error=err] {fig/ct_bearings_only_varying_sensors/metrics/nees/ls-ipls.data};
%\addlegendentry{\lsIpls}
\end{groupplot}
\end{tikzpicture}
  \caption{\label{fig:ct_bearings_only_varying_sensors:metrics}Simulated coordinated turn model with bearings only measurements. The plot shows averaged \rmse and \nees across iterations.
  Error bars correspond to the standard error, that is, the estimated standard deviation scaled by $1 / \sqrt{\numMcRuns}$.
}
\end{figure}
The \nees is sensitive to divergent trials but in terms of \rmse, it is clear that the regularised smoothers outperform the standard versions.
%
% We further include an analysis of the methods sensitivity to noise.
% \begin{figure}
%   \input{fig/ct_bearings_only_noise/fig.tikz}
%   \caption{
%   \label{fig:ct_bearings_only:noise}
%   Smoother noise sensitivity, TODO: rerun with lower noise, and include error bars in the tikz plot.
%   }
% \end{figure}
% \input{content/exp/timings.tex}
\section{Discussion and conclusions}
In this paper, we present extensions of the \ieks and \ipls smoothers in the form of \lm-regularised smoother \lmIpls and line-search smoothers \lsIeks, \lsIpls.
We build on existing work connecting the \ieks to \gn optimisation and derive a similar interpretation for the \ipls.
We show that \lm-regularisation can be achieved with a simple modification of the state-space model in the form of an added pseudo-measurement of the state.

We present simulation results that show that the proposed smoothers improve state-of-the-art smoothers in highly nonlinear settings, with an increase in computational burden,
with respect to standard iterated smoothers.
% the importance of the robustness achieved by our smoother methods,
% as well as providing further support for the conclusion of existing work that suggests the benefit of iterative smoothers in general.
%
% The experiments further show that the \ipls based smoothers perform better than their \ieks counterparts, albeit with a higher computational complexity.
% This is especially true for the \lsIpls since its performance relies on evaluating the cost function possibly many times to find a suitable step length $\alpha$.
% A cost function which is costly to compute, since the \slr expectations need to be reestimated for every new sequence of estimated means.
%
% The increased complexity is somewhat ameliorated by the empirical observation that the \ipls methods require fewer iterations to find an acceptable estimate of the trajectory.
%
% The benefit of regularisation appears to be more important for the \ieks based smoothers.
% This is probably due to the increased complexity \ipls method.
% In a sense, the ordinary \ipls is in itself regularised since it is will have less trust in an update in a region where the linearisation of motion or measurement is uncertain due to non-linearities.
% \bibliographystyle{elsarticle-harv}
\bibliographystyle{unsrt}
\bibliography{refs}
\clearpage
\newpage
\appendix
\section{Theoretical derivations}
\subsection{Proof of gradient of the IPLS cost function }
\label{app:gn_ipls:grad}
Here, we provide the proof of \cref{thm:gn_ipls:cost_fn:grad}.
The gradient of the \ipls cost function in \cref{eq:ipls:gn:cost_fn} is
\begin{align*}
  \grad \gnIplsObj{i}(\xSeq) &= \frac{1}{2} \twoNorm{\gnRIt{i}(\xSeq)}^2
                             = J_{\gnRIt{i}}(\xSeq)^{\top} \gnRIt{i}(\xSeq).
\end{align*}

To compute $J_{\gnRIt{i}}(\xSeq)$, where $\gnRIt{i}$ is defined in \cref{eq:def:ipls:gnr},
we need the Jacobians of the \slr expectations $\stateBar(\cdot), \measBar(\cdot)$
(for brevity we use the shorthand $p_{\ts}(\x) = \normal(\x; \x_{\ts}, \PHatIt{i}_{\ts})$):
\begin{small}
\begin{align}
  \label{eq:ipls:proc_jacobian}
  J_{\stateBar_{\ts}}(\x_{\ts}) &= J(\expect{\procFn(\x_{\ts})})
                                = J \left( \int \procFn(\x) p_{\ts}(\x) \d \x \right) \nonumber \\
                                &= \int \procFn(\x) J(p_{\ts}(\x)) \d \x
                                = \int \procFn(\x) (\x - \x_{\ts})^{\top} p_{\ts}(\x) \d \x \PHatItInv{i}
\end{align}
\end{small}
Here, we use equation (S-9 in \cite{damped_plf}) and note that
\begin{small}
\begin{align}
  \label{eq:ipls:zero_int}
  \int \stateBar_{\ts}(\x_{\ts}) \left( \x - \x_{\ts} \right)^{\top} p_{\ts}(\x) \d \x
       &= \stateBar_{\ts}(\x_{\ts}) \left( \int \x^{\top}  p_{\ts}(\x) \d \x - \x_{\ts}^{\top}  \int p_{\ts}(\x) \d \x \right)
       %&= \stateBar_{\ts}(\x_{\ts}) \left( \x_{\ts}^{\top}- \x_{\ts}^{\top} \right)
       = 0.
\end{align}
\end{small}
We substitute \cref{eq:ipls:zero_int} into the derivative in \cref{eq:ipls:proc_jacobian} to obtain
\begin{align}
  \label{eq:ipls:proc_bar:deriv}
  J_{\stateBar_{\ts}}(\x_{\ts})
  &= \int \left( \procFn(\x) - \stateBar_{\ts}(\x_{\ts}) \right) \left( \x - \x_{\ts} \right)^{\top} p_{\ts}(\x) \d \x\ \PHatItInv{i}\nonumber \\
  &= \PsiSlr{\procFn_{\ts}}^{\top}(\x_{\ts}) \PHatItInv{i} = \procAIt{i}_{\ts}(\x_{\ts}),
\end{align}
i.e., the \slr Jacobian in \cref{eq:slr:jacobian}, based on the estimates at iteration $i$.
Analogously, $J_{\measBar_{\ts}}(\x_{\ts}) = \measHIt{i}_{\ts}(\x_{\ts})$.
%Analogously,
%\begin{align}
%  \label{eq:ipls:meas_bar:deriv}
%  J_{\measBar_{\ts}}(\x_{\ts}) = \PsiSlr{\measFn_{\ts}}^{\top}(\x_{\ts}) \PHatItInv{i} = \measHIt{i}_{\ts}(\x_{\ts})
%\end{align}

Given the definition of $\gnRIt{i}$ in \cref{eq:def:ipls:gnr}, let $\allDim = \tsFin \dimState + \tsFin \dimMeas$ and compute the Jacobian $J_{\gnRIt{i}}(\xSeq): \reals^{\tsFin \dimState} \to \reals^{\allDim \times \tsFin \dimState}$, in terms of the above Jacobians:
\begin{align*}
  %\label{eq:ipls:gnr:jacobian}
  J_{\gnRIt{i}}(\xSeq) &= J_{\augQIplsInvSqrtT \augObsIpls}(\xSeq)
  % = \augQIplsInvSqrtT J_{\augObsIpls}(\xSeq)
  = \augQIplsInvSqrtT
  \begin{pmatrix}
    \procAIt{i}(\xSeq) &
    \measHIt{i}(\xSeq)
  \end{pmatrix}^\top
\end{align*}
where,
\begin{align}
  \label{eq:def:ipls:proc_jacobian_seq}
  \procAIt{i}(\xSeq) \coloneqq
  &\begin{pmatrix}
    I_{\dimState} &0 &\hdots &~ &0 \\
    -\procAIt{i}_1(\x_1) &I_{\dimState} &0 &\hdots &0 \\
    0 &-\procAIt{i}_2(\x_2) &I_{\dimState} &\hdots &0 \\
    \vdots &~ &\ddots &~ &\vdots \\
    0 &~ &\dots &I_{\dimState} &0 \\
    0 &~ &\dots & -\procAIt{i}_{\tsFin-1}(\x_{\tsFin-1}) &I_{\dimState}\\
  \end{pmatrix} \nonumber \\
\end{align}
and
\begin{align}
  \label{eq:def:ipls:meas_jacobian_seq}
  \measHIt{i}(\xSeq) \coloneqq
  &- \diag \left( \measHIt{i}_1(\x_1), \measHIt{i}_2(\x_2), \dots, \measHIt{i}It{i}_{\tsFin}(\x_{\ts}) \right).
\end{align}

\subsection{Proof details of the IPLS GN connection}
\label{app:gn_ipls}
The proof of \cref{thm:gn_ipls} has a similar structure as the proof of \cref{thm:gn_ieks},
which is outlined in \cref{fig:props_overview}.
The proof requires linearising and construction of the approximative GN cost function $\gnRLin$
and \slr linearisation of the state-space model in \cref{eq:def:state_space_model}.
We then show that this approximate GN objective corresponds to this approximate state-space model.
Here, we provide details of these steps.

The important part of the linearisation comes from \cref{thm:gn_ipls:cost_fn:grad}.
From the Jacobian $J_{\gnRIt{it}}$ we make the \gn approximation,
linearising around the current smoothed state sequence estimate $\xHatItSeq{i}$:
%\begin{scriptsize}
\begin{align}
  \label{eq:app:ipls:gnr:taylor}
  \gnRIt{i}(\xSeq) &\approx \gnRLin(\xSeq)
  = \gnRIt{i}(\xHatItSeq{i}) + J_{\gnRIt{i}}(\xHatItSeq{i}) (\xSeq - \xHatItSeq{i}) \nonumber \\
  &= \augQIplsInvSqrtT \augObsIpls(\xHatItSeq{i})
  + \augQIplsInvSqrtT
  \begin{pmatrix}
    \procAIt{i}(\xHatItSeq{i}) \\
    \measHIt{i}(\xHatItSeq{i})
  \end{pmatrix}
  (\xSeq - \xHatItSeq{i}) \nonumber \\
  % &= \augQIplsInvSqrtT \nonumber \\
  % &\begin{pmatrix}
  %   \xHatIt{i}_1 - \priorMean + \x_1 - \xHatIt{i}_1\\
  %   \xHatIt{i}_2 - \stateBar_1(\xHatIt{i}_1) - \procAIt{i}_{1}(\xHatIt{i}_1) (\x_1 - \xHatIt{i}_1) + (\x_2 - \xHatIt{i}_2)\\
  %   \xHatIt{i}_3 - \stateBar_2(\xHatIt{i}_2) - \procAIt{i}_{2}(\xHatIt{i}_2) (\x_2 - \xHatIt{i}_2) + (\x_3 - \xHatIt{i}_3)\\
  %   \vdots \\
  %   \xHatIt{i}_{\tsFin} - \stateBar_{\tsFin - 1}(\xHatIt{i}_{\tsFin - 1}) - \procAIt{i}_{\tsFin-1}(\xHatIt{i}_{\tsFin-1}) (\x_{\tsFin-1} - \xHatIt{i}_{\tsFin-1}) + (\x_{\tsFin} - \xHatIt{i}_{\tsFin})\\
  %   \meas_1 - \measBar_1(\xHatIt{i}_1) - \measHIt{i}_{1}(\xHatIt{i}_1) (\x_1 - \xHatIt{i}_1)\\
  %   \meas_2 - \measBar_2(\xHatIt{i}_2) - \measHIt{i}_{2}(\xHatIt{i}_2) (\x_2 - \xHatIt{i}_2)\\
  %   \vdots \\
  %   \meas_{\tsFin} - \measBar_{\tsFin}(\xHatIt{i}_{\tsFin}) - \measHIt{i}_{{\tsFin}}(\xHatIt{i}_{\tsFin}) (\x_{\tsFin} - \xHatIt{i}_{\tsFin})\\
  % \end{pmatrix} \nonumber \\
  &= \augQIplsInvSqrtT
  \underbrace{
  \begin{pmatrix}
    \x_1 - \priorMean \\
    \x_2 - \left( \procAIt{i}_{1}(\xHatIt{i}_1) \x_1 + \procB_{1}(\xHatIt{i}_1) \right ) \\
    \x_3 - \left( \procAIt{i}_{2}(\xHatIt{i}_2) \x_2 + \procB_{2}(\xHatIt{i}_2) \right ) \\
    \vdots \\
    \x_{\tsFin} - \left( \procAIt{i}_{\tsFin-1}(\xHatIt{i}_{\tsFin}) \x_{\tsFin} + \procB_{\tsFin-1}(\xHatIt{i}_{\tsFin}) \right ) \\
    \meas_1 - \left( \measHIt{i}_{1}(\xHatIt{i}_1) \x_1 + \measC_{1}(\xHatIt{i}_1) \right) \\
    \vdots \\
    \meas_{\tsFin} - \left( \measHIt{i}_{\tsFin}(\xHatIt{i}_{\tsFin}) \x_{\tsFin} + \measC_{\tsFin}(\xHatIt{i}_{\tsFin}) \right) \\
  \end{pmatrix}}_{\coloneqq \augObsIplsApprox(\xSeq)}
  % &= \augQIplsInvSqrtT \augObsIplsApprox(\xSeq)
\end{align}
%\end{scriptsize}
% where, in the second to last equality, we use the relation in \cref{eq:slr:offset}, that is, $\stateBar(\xHatIt{i}_{\ts}) - \procAIt{i}_{\ts}(\xHatIt{i}_{\ts}) \xHatIt{i}_{\ts} = \procB_{\ts}(\xHatIt{i}_{\ts})$.
% Analogously for the \slr linearisation of $\measFn(\cdot)$.
where, in the last equality, we use the relation in \cref{eq:slr:offset} to express $\gnRLin(\xSeq)$ in terms of the offsets $\procB_{1:\tsFin}(\cdot)$ and $\measC_{1:\tsFin}(\cdot)$.
\subsection{Proof of \cref{thm:lm_rts_smoothing}}
Here we detail the three steps of the proof of \cref{thm:lm_rts_smoothing}.
In step 1 we derive the approximate \lm objective.
From \cref{eq:lm:cost_fn} we have that the \lm objective $\lmObj{i}(\xSeq)$ is the sum of the \gn objective $\gnObj(\xSeq)$ and a regularisation term.
By simply extending $\gnR(\xSeq)$ in \cref{eq:def:ipls:gnr}, we can construct
\begin{align}
  \label{eq:def:ipls:lm:gnr}
  \gnRLM{i}(\xSeq) &=
  \begin{pmatrix}
    \gnR(\xSeq) &
    \lmRegSqrtT (\xSeq - \xHatItSeq{i})
  \end{pmatrix}^\top,
\end{align}
such that $\lmObj{i}(\xSeq) = \frac{1}{2}\twoNorm{\gnRLM{i}}^2$.

Since the regularisation term is linear in $\xSeq$, the result is
\begin{align}
  \gnRLM{i}(\xSeq) \approx
  \begin{pmatrix}
    \gnRLin(\xSeq) &
    \lmRegSqrtT (\xSeq - \xHatItSeq{i})
  \end{pmatrix}^\top,
\end{align}
where we note that the approximate objective is indeed the approximate \gn objective in \cref{eq:ipls:approx_objective} with added \lm regularisation
\begin{align}
  \label{eq:ipls:lm:approx_cost}
  \lmObjLin(\xSeq)
  &= \frac{1}{2}
  \twoNorm{
  \begin{pmatrix}
    \gnRLin(\xSeq) &
    \lmRegSqrtT (\xSeq - \xHatItSeq{i})
  \end{pmatrix}^\top
  }^2
  \nonumber \\
                   &= \gnObjLin(\xSeq)
                   + \frac{1}{2} \lmRegParamIt{i} (\xSeq - \xHatItSeq{i})^{\top} \left[ \regMat^{(i)}\right]^{-1} (\xSeq - \xHatItSeq{i}).
\end{align}

In step 2 we derive the negative log-posterior for the linearised state-space model with the measurement $\xHatIt{i}_{\ts}$ in \cref{eq:lm:extended_problem}.
The additional measurement will, for each timestep, contribute with a term $\frac{1}{2} \lmRegParamIt{i} (\x_{\ts} - \xHatIt{i}_{\ts})^{\top} \left[ \regMat^{(i)}_{\ts}\right]^{-1} (\x_{\ts} - \xHatIt{i}_{\ts})$ to the negative log-posterior.
All the extra measurements can be combined into a single term $\frac{1}{2} \lmRegParamIt{i} (\xSeq - \xHatItSeq{i})^{\top} \left[ \regMat^{(i)}\right]^{-1} (\xSeq - \xHatItSeq{i})$,
with the block-diagonal matrix $\regMat^{(i)} = \diag(\regMat_1^{(i)}, \regMat_2^{(i)}, \dots, \regMat_{\tsFin}^{(i)}),\, \regMat^{(i)}_{\ts} \in \reals^{\dimState \times \dimState}$.
Note that we restrict ourselves to $\regMat^{(i)}$ on this form, whereas other suitable options exist \cite{num_opt,nonlinear_regression}.
The measurement model for $\xHatItSeq{i}$ is linear in $\xSeq$ so the negative log-posterior of the approximated state-space model produced by \ieks or \ipls linearisation will be extended with this term.
%Compared to the previous posterior, this additional measurement will give rise to a term $\frac{1}{2} \lmRegParamIt{i} (\xSeq - \xHatItSeq{i})^{\top} \left[ \regMat^{(i)}\right]^{-1} (\xSeq - \xHatItSeq{i})$.
%For the particular choice of a sequence of block-diagonal regularisation matrices in the propostion formulation
%$\xHatItSeq{i}$ can be interpreted as a measurement of normally distributed $\xSeq$ with noise covariance $(\lmRegParamIt{i}^{-1}) \regMat^{(i)}_{\ts}$ (for every timestep $\ts$) as in \cref{eq:lm:extended_problem}.

In step 3 we compare the linearisations.
We know from \cref{thm:gn_ieks,thm:gn_ipls} that the log-posterior without the measurement is $\gnObjLin(\xSeq)$ and after introducing the measurement the log-posterior (up to a constant)
is therefore $\lmObjLin(\xSeq)$ in \cref{eq:ipls:lm:approx_cost}.
We conclude that the algorithms yield the same result since they minimise the same loss function in closed form.
It follows that an iteration of the \ieks or \ipls for this extended state-space model is equivalent to an iteration of \lm optimisation of the corresponding cost function.
\subsection{Armijo and Wolfe step length conditions}
% TODO: pT grad f not guranteed to be negative.
This section provides the Armijo and Wolfe step length conditions for the \lsIpls method, see \cref{eq:def:armijo_cond,eq:def:wolfe_cond}.
An inexact line-search method only requires a sufficient decrease in the cost function, rather than finding the minimum along the search direction $\searchDirSeq{i}$.
A sufficient decrease is guaranteed by fulfilling two conditions of the cost function and its gradient at a point on the line.

The first condition is commonly referred to as the Armijo condition,
and combined they are called the Wolfe conditions or the Armijo--Wolfe conditions\cite{num_opt}.
The Armijo condition is a sufficient condition for a decreasing cost function,
whereas the Wolfe condition ensures that the step length $\alpha$ is large enough to give faster convergence.

To check the Armijo--Wolfe conditions for a certain step length $\alpha$ we need to compute the gradient of the cost function,
which comes directly from \cref{thm:gn_ipls:cost_fn:grad}.

Due to the sparseness of the Jacobian, the directional derivative can be computed more efficiently by evaluating the product
\begin{align}
  &(\searchDirSeq{i})^{\top} \grad \costFn(\xHatItSeq{i})
  = (\searchDir{i}_{1})^{\top} \priorCov^{-1}(\xHatIt{i}_1 - \priorMean) \nonumber \\
  &\hspace{1cm}+ \sum_{\ts=1}^{K-1} \left( \searchDir{i}_{\ts+1} - \procA_{\ts}(\xHatIt{i}_\ts) \searchDir{i}_{\ts} \right)^{\top}
  \times (\procCov_{\ts} + \procLinCovIt{i}_{\ts})^{-1}(\xHatIt{i}_{\ts+1} - \stateBar_\ts(\xHatIt{i}_\ts)) \nonumber \\
  &\hspace{1cm}- \sum_{\ts=1}^{K} \left( \measH_{\ts}(\xHatIt{i}_\ts) \searchDir{i}_{\ts} \right)^{\top} (\measCov_{\ts} + \measLinCovIt{i}_{\ts})^{-1}(\xHatIt{i}_{\ts+1} - \measBar_\ts(\xHatIt{i}_\ts)).
\end{align}

The derivations for the gradient of the \ieks cost function are analogous,
but instead using the actual measurement and motion models and their respective analytical Jacobians.
\section{Experimental details}
\label{sec:app:exp_details}
% In this section we provide details for the experimental setup in the numerical simulations.
% 
For all simulations, we use the same coordinated turn (CT) motion model with state
$
    \mathbf{x} = \begin{pmatrix}
        x &
        y &
        \dot{x} &
        \dot{y} &
        \omega
    \end{pmatrix}^\top,
$
and a constant motion model for all $\ts$
\begin{equation}
    \procFn_\ts(\mathbf{x}) = 
    \begin{pmatrix}
    x + \frac{\sin(\omega t)}{\omega} \dot{x} - \frac{\cos(\omega t) - 1}{\omega} \dot{y} \\
    y + \frac{\cos(\omega t) -1 }{\omega} \dot{x} + \frac{\sin(\omega t)}{\omega} \dot{y} \\
    \cos(\omega t) \dot{x} + \sin(\omega t) \dot{y} \\
    - \sin(\omega t) \dot{x} + \cos(\omega t) \dot{y} \\
    \omega
    \end{pmatrix},
\end{equation}
where $t=0.01$.
The covariance matrix of the process noise (constant for all timesteps) is
\begin{equation}
    Q_{\ts} =
    \begin{pmatrix}
    \sigma_c t^3 / 3 & 0 & \sigma_c t^2 / 2 & 0 & 0 \\
    0 & \sigma_c t^3 / 3 & 0 & \sigma_c t^2 / 2 & 0 \\
    \sigma_c t^2 / 2 & 0 & \sigma_c t & 0 & 0 \\
    0 & \sigma_c t^2 / 2 & 0 & \sigma_c t & 0 \\
    0 & 0 & 0 & 0 & \sigma_\omega t,
    \end{pmatrix}
\end{equation}
with $\sigma_c = 0.01$ and $\sigma_\omega = 10$.

True trajectories with $\tsFin = 500$ timesteps are sampled from this motion model with prior mean 
$\priorMean = (0, 0, 1, 0, 0)^{\top}$ and covariance $\priorCov = \diag(0.1, 0.1, 1, 1, 1)$.
% \subsection{Additional experiments}
% \input{content/exp/lm_ieks_exp_extra.tex}
% \input{content/exp/tunnel_simulation.tex}
% \input{content/exp/non_stat_growth.tex}
% \subsection{Additional derivations}
% \input{content/background/linearisation.tex}
% \input{content/background/derivation_gn_ieks.tex}

\end{document}